\title[On Twisting Functions]{On twisting functions}
\author{Li Cai}
\address{Department of Pure Mathematics, Xi'an Jiaotong-Liverpool University, 111 Ren'ai Road, Dushu Lake Higher Education Town, Suzhou 215123, Jiangsu, China}
\email{Li.Cai@xjtlu.edu.cn}
\subjclass[2020]{Primary 55U10; Secondary 55P35}
\keywords{simplicial sets, based loop spaces}
\newtheorem{thm}{Theorem}[section]
\newtheorem{cor}[thm]{Corollary}
\newtheorem{lem}[thm]{Lemma}
\newtheorem{prop}[thm]{Proposition}
\theoremstyle{definition}
\theoremstyle{definition}
\theoremstyle{remark}
\def\co{\colon\thinspace}
\def\wt{\widetilde}
\newcommand\numberthis{\addtocounter{equation}{1}\tag{\theequation}}
\begin{document}
\begin{abstract}    
In this work, we unify different constructions of Kan's loop group $GX$ for a reduced simplicial set $X$ topologically, by identifying its geometric realization $|GX|$ as different submonoids of $\Omega|X|$, the monoid of based Moore loops on $|X|$. Then we construct a cubical subcomplex $|CX|\subset |GX|$ as a submonoid and prove that after inverting all elements of degree $0$ in $CX$, the inclusion $|S^{-1}CX|\subset |GX|$ is a (weak) homotopy equivalence. Our construction is functorial and explicit, without using inductions.  
\end{abstract}

\maketitle
\section{Introduction}
According to Dwyer and Kan \cite{DK84}, 
we have a pair of adjoint functors 
\[
     G\co \mathrm{sSets}\longleftrightarrow  \mathrm{sGroupoids} \co \overline{W}    
\]
between the homotopy theory of the category $\mathrm{sSets}$ of simplicial sets and the homotopy theory of the category $\mathrm{sGrpoids}$ of simplicial groupoids, giving an equivalence between them. This equivalence specializes to that between reduced simplicial sets and simplicial groups, by earlier works of Kan \cite{Kan58b} and May \cite{May68}. 

Let $X$ be a reduced simplicial set (i.e., it has a single element in dimension $0$). The definition of a simplicial group $GX$ depends on the choice of a twisting function $\tau$. In fact, such a choice is not unique, and we list all four choices in the literature (we only chose some early works, as it would be too long to list all of them):
 \begin{table}[htp]
\caption{The four versions of $\tau$}
\begin{center}
\begin{tabular}{|c|c|c|}
 Choices  & Face and Degeneracy Maps ($x\in X_n$) &  Used in Literature \\
            \hline
 a1)-b1)  & $d_0\tau x=(\tau d_1x) (\tau d_0 x)^{-1}, \   \tau s_0x=1;$ &  Curtis \cite{Cur71}\\
            &              $d_i\tau x=\tau d_{i+1}x, s_i\tau x=\tau s_{i+1}x, i=1,\ldots,n-1$ & \\
            \hline
a1)-b2) &  $d_0\tau x=(\tau d_0 x)^{-1}(\tau d_1x), \  \tau s_0x=1;$    & Gugenheim \cite{Gug60}, Szczarba \cite{Szc61}, May \cite{May68}  \\
             & $d_i\tau x=\tau d_{i+1}x, s_i\tau x=\tau s_{i+1}x,\ i=1,\ldots,n-1$ & \\
             \hline
a2)-b1)  & $d_{n-1}\tau x=(\tau d_n x)^{-1} (\tau d_{n-1}x),  \ \tau s_nx=1$; &   Berger \cite{Ber95}\\
            &              $d_i\tau x=\tau d_i x, s_i\tau x=\tau s_i x, i=0,\ldots, n-2$ & \\
            \hline
a2)-b2)  & $d_{n-1}\tau x= (\tau d_{n-1}x) (\tau d_n x)^{-1}, \  \tau s_nx=1$; &   Kan \cite{Kan58a} \\
            &              $d_i\tau x=\tau d_i x, s_i\tau x=\tau s_i x, i=0,\ldots, n-2$ & \\
\end{tabular}
\end{center}
\label{default}
\end{table}%

The first part of this work is to show that the relations between the choices of the twisting functions essentially come from two choices of inclusions $|GX|\subset \Omega|X|$ as a morphism of topological monoids, $\Omega|X|$ the Moore loops on the geometric realization $|X|$ of $X$. More precisely, we show that by considering the contractible total space $PX$ in fibration $PX\to X$ as a1) paths beginning with the base point, or a2) paths ending with it; together with the multiplication of two loops $\gamma,\gamma'$ is either b1) $\gamma \cdot\gamma'$ (as their concatenation) or b2) $\gamma'\cdot\gamma$ (as a composition of morphisms $\bullet\stackrel{\gamma}{\rightarrow} \bullet \stackrel{\gamma'}{\rightarrow} \bullet$), we get all four versions of twisting functions $\tau\co X\to GX$ listed in the table above.

 In the second part of this work we consider a further reduction from $GX$ to a cubical complex $CX$, by merging the simplices together (following Berger \cite[pp. 29--31]{Ber95}). More precisely, 
for an element $x=[0,\ldots,n+1]_x\in X_{n+1}$, $n\geq -1$, and let $g=(g_1,\ldots, g_n)\in S_n$ be a permutation $g\co \{1,\ldots, n\}\to \{1,\ldots, n\}$, $g_i=g(i)$ ($g=()\in S_0=\emptyset$ is a formal notation), we define ($T$ means triangulating)
\[cx=\begin{cases}\cup_{g\in S_n}Tcx(g) & n\geq 1\\
          \tau x & n=0\\
        
           \end{cases}
\]
where  
\begin{equation*}
Tcx(g)=\prod_{r=0}^{n}\tau[0,\alpha_{r1},\alpha_{r2},\ldots, \alpha_{rn},r+1]_x, \ \alpha_{rj}=
\max\left(\{0,1,\ldots, r\}\cap\{0,g_1,g_2,\ldots, g_j\}\right),  
\end{equation*}
$j=1, \ldots,n$, in which $[f_0,\ldots, f_{n+1}]_x$ is the face $X(f)(x)$ of $x$ associated with the non-decreasing function 
$f\co \{0,\ldots,n+1\}\to \{0,\ldots,n+1\}$, $f(i)=f_i$ (here we consider a simplicial set as a contravariant functor). We prove that the $n!$ simplices $Tcx(g)$ match well to give a desired cube; let $CX$ be a free monoid generated by cubes $cx$, $x$ running through all non-degenerate simplices of $X$, then the inclusion
\[|S^{-1}CX|\subset |GX|\] induces a (weak) homotopy equivalence as topological monoids, here  $S^{-1}CX$ is the localization of the free monoid $CX$ with respect to the collection $S$ of degree $0$ elements (i.e., all possible products of $\tau x$, with $\dim x=1$). 

 As a remark, a similar construction of cubical complexes in $GX$ appears in a recent work \cite{Fra25} by Franz (assuming that $X$ is $1$-reduced), realizing the categorical approach \cite{MRZ23} by Minichiello, Rivera and Zeinalian. Their construction is based on the original chain map given in Szczarba \cite{Szc61}. As a comparison, a simplex in a cube is of the form \[\pm (\tau x_1')^{-1}\ldots (\tau x_n')^{-1},\] with certain faces $x_1',\ldots, x_n'$ of $x$ that are inductively defined, using the Szczarba operators. The main advantage of our triangulation $Tcx(g)$ is that we can write it down immediately once $g\in S_n$ is given, without using inductions. Moreover, each element is a product of the generators $\tau x$ of $GX$, without their inverses $(\tau x)^{-1}$ involved. 
 
This work is topological in nature, without using the language of model categories. On the homology of loop spaces, for a commutative ring $R$, the differential graded algebra $(RCX,d)$ given by the free monoid $CX$ endowed with the differential $d$ by taking faces of cubes coincides with the original cobar complex constructed by Adams (when $X$ is $1$-reduced).  Furthermore, by Baues \cite{Bau98}, it admits a comultiplication which is coassociative (and cocommutative up to homotopy), so that its homology coincides with $H_*(\Omega|X|;R)$ as Hopf algebras. Explicit calculations and a more detailed comparison with the Szczarba operators will be given in a subsequent work.

 The paper is organized as follows. In Sections \ref{sec:1}, \ref{sec:2}, we review Berger's simplicial category $\Gamma X$, and show that the topological monoid $\Omega |X|$ of Moore loops on a reduced simplicial set $X$ is homotopy equivalent to $|\Omega X|$, where $\Omega X$ is a simplicial group consisting of morphisms in $\Gamma X$ whose source and target are the base point. All results in these two sections are due to Berger \cite{Ber95}. In Section \ref{sec:3} we use Berger's reduction from $\Omega X$ to its subgroup $GX$, to show that the difference between the four versions of twisting functions $\tau\co X\to GX$ in the literature comes from different morphisms $GX\to \Omega X$ of simplicial groups. In Section \ref{sec:4} we illustrate how simplices in $GX$ are merged into cubes, one cube for each non-degenerate $x\in X$, so that we have  a cubical complex $|CX|$ as a subcomplex of all singular cubes on $|GX|$, and the morphism $|T|\co |S^{-1}CX|\to |GX|$ of topological monoids induces a homotopy equivalence.

The author thanks Matthias Franz for pointing out Berger's work \cite{Ber95} on possible morphisms $|GX|\subset  \Omega|X|$, and for helpful discussions with him.  During this work he was supported in part by National Natural Science Foundation of China (grant no. 11801457).
 
\section{The Simplicial Category $\Gamma X$}\label{sec:1}
Let $X=(X_n)_{n=0}^\infty$ be a simplicial set. Throughout this work, we frequently identify an $n$-simplex $x_n\in X_n$ with the singular simplex
\[
        |x_n|: |\Delta^n|\to |X|
\]
on the geometric realization $|X|$,
which is induced from the simplicial map $x_n:\Delta^n\to X$ given by the Yoneda lemma, here $\Delta^n=(\Delta^n_k)_{k=0}^\infty$ is the simplicial set with $\Delta^n_k$ collecting all non-decreasing maps $[k]\to[n]$. As a singular simplex with barycentric coordinates $(t_0,\ldots, t_n)$, we have the face and degeneracy maps $d_i: X_n\to X_{n-1}$ and $s_i: X_n\to X_{n+1}$ given by 
\begin{align*}
      |d_ix_n|(t_0,\ldots, t_{n-1})&=|x_n|(t_0,\ldots, t_{i-1},0, t_{i},\ldots, t_{n-1}), \\ 
       |s_ix_n|(t_0,\ldots, t_{n+1})&=|x_n|(t_0,\ldots, t_{i-1},t_i+t_{i+1},\ldots, t_{n+1}), \numberthis \label{eq:o1}
\end{align*}
$i=0,\ldots,n$. Let $\Gamma X=(\Gamma_n X)_{n=0}^\infty$ be the simplicial object in which each $\Gamma_n X$ is a small category, $n\geq 0$, whose face and degeneracy maps $d_i,s_i$ are functors between them. More precisely, $Obj(\Gamma_n X)=X_n$ coincides with $X_n$, and $Mor(\Gamma_n X)$ is generated by 
\[X_{n+1}\times [n]\times\{-1,1\}=\{(x_{n+1},i),(x_{n+1},i)^{-1}\mid x_{n+1}\in X_{n+1}, i=0,\ldots, n\},\] 
in which an element is called an \emph{elementary prism},  where $x_n,x_n'\in X_n$ is connected by $(x_{n+1},i): x_n\to x_n'$ if and only if 
$d_{i+1}x_{n+1}=x_n$ and $d_{i}x_{n+1}=x_n'$, $i=0,\ldots,n$. As a singular $(n+1)$-simplex, $|(x_{n+1},i)|$ coincides with the Moore paths parametrized by $d_{i+1}x_{n+1}$, which is explicitly given by the formula
\begin{equation}
     |(x_{n+1},i)|(t,t_0,\ldots,t_n)=|x_{n+1}|(t_0,t_1,\cdots, t_{i-1}, t_i-t, t, t_{i+1},\cdots, t_{n})\label{def:xii}
\end{equation}
here $t\in [0,t_i]$, $t_i=1-\sum_{j\not=i}t_j$ depending on the coordinates $t_j$, $j\not=i$. 

Notice that for every $x_n\in X_n=Obj(\Gamma_n X)$, by \eqref{eq:o1}, 
\begin{equation*}
     |(s_ix_n,i)|(t,t_0,\ldots,t_n)=|s_nx_n|(t_0,t_1,\cdots, t_{i-1}, t_i-t, t, t_{i+1},\cdots, t_{n})=|x_n|(t_0,\ldots,t_n),
\end{equation*}
hence $(s_ix_n,i)=\mathrm{id}_{x_n}$, $i=0,\ldots,n$. 
We define the elementary prism $(x_{n+1},i)^{-1}: d_ix_{n+1}\to d_{i+1}x_{n+1}$ by 
\begin{equation}
     |(x_{n+1},i)^{-1}|(t,t_0,\ldots,t_n)=|x_{n+1}|(t_0,t_1,\cdots, t_{i-1},  t, t_{i}-t, t_{i+1},\cdots, t_{n}).\label{def:xiiii}
\end{equation}
A general element in $Mor(\Gamma_n X)$, called an $n$-\emph{prism}, is a composition of elementary ones, whenever possible. It is convenient to denote a composition 
\[(x_{n+1}',i')^{\varepsilon'}\circ (x_{n+1},i)^{\varepsilon}\in Mor(\Gamma_n X), \quad \varepsilon, \varepsilon'\in \{1,-1\},\]
of two elementary prisms as the product \[(x_{n+1},i)^{\varepsilon} (x_{n+1}',i')^{\varepsilon'},\] which is understood as the concatenation of two path segments 
\[ x_n\stackrel{(x_{n+1},i)^{\varepsilon}}{\longrightarrow} x_n' \stackrel{(x_{n+1}',i')^{\varepsilon'}}{\longrightarrow}  x_n'' \]
parametrized by elements in $X_n$. 
In this way we write a general $n$-prism $\gamma\in Mor(\Gamma_n X)$ in the form
\begin{equation}
 \gamma_s=\prod_{k=1}^l(\xi_k,i_k)^{\varepsilon_k}, \quad \varepsilon_k=\pm 1,  \label{def:nprism}
\end{equation}
in which the target of $(\xi_k,i_k)^{\varepsilon_k}$ is the source of $(\xi_{k+1},i_{k+1})^{\varepsilon_{k+1}}$, $k=1,\ldots,l-1$. 

The face and degeneracy functors $d_j:\Gamma_n X\to \Gamma_{n-1}X$, $s_j: \Gamma_n X\to \Gamma_{n+1}X$ are defined through their operations on the parameters, namely 
\begin{align*}
|d_j(x_{n+1},i)|(t,t_0,\ldots,t_{n-1})&=|(x_{n+1},i)|(t,t_0,\ldots,t_{j-1},0,t_{j+1},\ldots, t_{n-1})\\ 
|s_j(x_{n+1},i)|(t,t_0,\ldots,t_{n+1})&=|(x_{n+1},i)|(t,t_0,\ldots,t_{j-1},t_{j}+t_{j+1},t_{j+2},\ldots, t_{n+1}). 
\end{align*}
It can be checked straightforwardly that we have commutative diagrams 
\begin{equation}
    \begin{CD}
   x_n@>(x_{n+1},i) >> x_n'\\
   @Vd_j VV                         @Vd_j VV\\
   d_jx_n@>d_j(x_{n+1},i)>> d_j x_n',
\end{CD} \quad\quad
\begin{CD}
   x_n@> (x_{n+1},i) >> x_n'\\
   @Vs_j VV                         @Vs_j VV\\
   s_jx_n@>s_j(x_{n+1},i)>> s_j x_n',
\end{CD}
\label{def:CD1}
\end{equation}
$j=0,\ldots,n$, hence the face and degeneracy functors satisfy necessary axioms so that $\Gamma X$ is a simplicial object. Moreover, the collection $Mor(\Gamma X)=(Mor(\Gamma_n X))_{n=0}^{\infty}$ of prisms is closed under face and degeneracy maps, thus it is a simplicial set itself. 

\section{Reduction of Moore loops to simplicial prisms} \label{sec:2}
Although we have defined $(x_{n+1},i)$, $(x_{n+1},i)^{-1}$ (see \eqref{def:xii}, \eqref{def:xiiii}), the relations
\begin{equation} 
(x_{n+1},i)(x_{n+1},i)^{-1}=\mathrm{id}_{d_{i+1}x_{n+1}}, \quad (x_{n+1},i)^{-1}(x_{n+1},i)=\mathrm{id}_{d_ix_{n+1}}\label{rel:xi}
\end{equation}
hold only after homotopy. We define $h\Gamma X$ as the simplicial groupoid obtained from $\Gamma X$ by adding relations \eqref{rel:xi}, so that every morphism is invertible. 

Let $|X|^{[0,+\infty)}$ be the collection of Moore paths, namely the continuous maps $\gamma: [0,\infty)\to |X|$ such that $\gamma(t)$ is constant whenever $t$ is sufficiently large. We see that the function $\rho\co |X|^{[0,+\infty)}\to [0,+\infty)$ given by the infimum $\rho(\gamma)=\inf\{t\mid \gamma(t+t')=\gamma(t),\forall t'\geq 0\}$ is well-defined. For  $\gamma,\gamma'\in |X|^{[0,+\infty)}$,  their concatenation is given by
\[
\gamma\cdot\gamma'(t)=\begin{cases}\gamma(t) & t\in[0,\rho(\gamma)]\\
\gamma'(t-p) & t\in [\rho(\gamma),+\infty)
\end{cases}
\]
when $\gamma(\rho(\gamma))=\gamma'(0)$. It is well-known that equipped with the compact open topology and concatenations of paths, $|X|^{[0,+\infty)}$ is a topological monoid which is strictly associative, moreover, the automorphism of $|X|^{[0,+\infty)}$ sending $\gamma$ to $\gamma^{-1}$ is continuous, where  $\gamma^{-1}(t)=\gamma(\rho(\gamma)-t)$ for $t\in [0,\rho(\gamma)]$ and $\gamma^{-1}(t)=\gamma(0)$ for $t\in [\rho(\gamma),+\infty)$.

Let $|Mor(\Gamma X)|$ be the geometric realization of the simplicial set $Mor(\Gamma X)$ of prisms. We define a morphism
\begin{equation}
\varrho:|Mor(\Gamma X)|\to |X|^{[0,+\infty)} \label{def:f1}
\end{equation}
 of topological monoids, such that an $n$-simplex $|\gamma_s|\co |\Delta^n|\to |Mor(\Gamma X)|$ associated to the prism $\gamma_s$ of the form \eqref{def:nprism} is sent to 
\begin{align*}
&\varrho(|\gamma_s|)(t,t_0,\cdots, t_n)=\\
&\begin{cases}
 |\xi_k|(t_0,t_1,\cdots, t_{i_k-1}, t_{i_k}-(t-\sum_{s<k}t_{i_s}), t-\sum_{s<k}t_{i_s}, t_{i_k+1},\cdots, t_{n}) & t\in [\sum_{s<k}t_{i_s},\sum_{s\leq k}t_{i_s}]\\
 |(\xi_l,i_l)|(t_{i_l},t_0,\cdots,t_n) & t\geq t_{i_1}+\ldots+t_{i_l},
\end{cases}
\end{align*} 
as the concatenation of parametrized paths $|(\xi_k,i_k)|$ (see \eqref{def:xii}), $k=1,\ldots,l$.  It is easy to check that $\varrho$ is continuous and preserves the multiplications, since the image under $\varrho$ of the concatenation $\gamma_s\cdot\gamma'_s$ of two prisms is the concatenation $\varrho(\gamma_s)\cdot \varrho(\gamma_s')$ of parametrized Moore paths.

To simplify $\varrho$ by homotopy, we consider the quotient $|X|^{[0,+\infty)}\to h|X|^{[0,+\infty)} $ by adding the relations $\gamma\cdot\gamma^{-1}=\mathrm{id}_{\gamma(0)}$ and $\gamma^{-1}\cdot\gamma=\mathrm{id}_{\gamma^{-1}(0)}$ for all $\gamma\in |X|^{[0,+\infty)}$. The composition of this quotient with $f$ gives a map $|Mor(h\Gamma X)|\to h|X|^{[0,+\infty)}$, thus by \eqref{rel:xi} we get a well defined map 
\[
              h\varrho\co |Mor(h\Gamma X)|\to h|X|^{[0,+\infty)}.
\]   
Notice that by definition, we have $Obj(h\Gamma X)=Obj(\Gamma X)=X$ as simplicial sets. Moreover, by taking the source and target simplices connected by a prism $\gamma_s\in Mor(\Gamma_hX)$, we have a simplicial map 
\[
\begin{CD}
 Mor(h\Gamma X)@>(\pi_s,\pi_t)>> X \times X,
 \end{CD}
\]
whose geometric realization satisfies the commutative diagram
\[
\begin{CD}
Mor(h\Gamma X)@>(|\pi_s|,|\pi_t|)>> |X| \times |X|\\
@V h\varrho VV                                                      @V=VV\\
h|X|^{[0,+\infty)}@>(|\pi_s|,|\pi_t|)>> |X| \times |X|,
\end{CD}
\]
where $|\pi_s|,|\pi_t|$ are projections onto the source and target of Moore paths, respectively. We recall the following result of Berger, which we shall not prove here (see \cite[Prop. 1.3, page 6]{Ber95}).
\begin{thm}\label{thm:berger1}
The simplicial map $(\pi_s,\pi_t)\co Mor(h\Gamma X)\to X\times X$ is a Kan fibration for any pathwise connected simplicial set $X$ ($X$ itself is not necessarily Kan).  The geometric realizations $|f|,|g|$ of two given simplicial maps $f,g: Y\to X$ are homotopic, if there exists a lifting $H\co Y\to Mor(h\Gamma X)$ such that $\pi_s\circ H=f$ and $\pi_t\circ H=g$.
\end{thm}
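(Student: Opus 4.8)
The plan is to establish the two assertions in turn, with essentially all of the work going into the first. For the fibration property I would verify that $(\pi_s,\pi_t)$ has the right lifting property against every horn inclusion $\Lambda^n_k\hookrightarrow\Delta^n$. By the universal property of the pullback, such a lifting problem amounts to a pair $\sigma,\tau\in X_n$ (the bottom map $\Delta^n\to X\times X$) together with a section over $\Lambda^n_k$ of the projection $p\co E:=\Delta^n\times_{X\times X}Mor(h\Gamma X)\to\Delta^n$, and one has to extend this section over all of $\Delta^n$. I would do this by proving that $p$ is itself a Kan fibration, so that the extension exists because $\Lambda^n_k\hookrightarrow\Delta^n$ is a horn inclusion.

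To see that $p$ is a Kan fibration, observe that for $\theta\in\Delta^n_m$ the corresponding fibre of $E$ is the set $\mathrm{Hom}_{h\Gamma_mX}(\theta^*\sigma,\theta^*\tau)$ of morphisms of the groupoid $h\Gamma_mX$. The essential point — and the one place where pathwise connectedness of $X$ enters — is that $h\Gamma_mX$ is connected for every $m$: any $y\in X_m$ is joined to the degenerate simplex $s_0^m v$ on one of its vertices $v$ by an explicit chain of elementary prisms of the form $(s_{i+1}z,i)$, obtained by repeatedly collapsing an edge via the simplicial identities, and the degenerate $m$-simplices on two vertices $v$ and $w$ are joined by a chain of elementary prisms built from iterated degeneracies of a path of edges in $X$ from $v$ to $w$. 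Fixing prisms $c_\sigma\co\sigma\to s_0^n v$ and $c_\tau\co\tau\to s_0^n v$ in $h\Gamma_nX$, the assignment $(\theta,\mu)\mapsto\bigl(\theta,(\theta^*c_\tau)\circ\mu\circ(\theta^*c_\sigma)^{-1}\bigr)$ is an isomorphism of simplicial sets over $\Delta^n$ between $E$ and $\Delta^n\times G_v$, where $G_v$ is the simplicial group whose $m$-simplices are the automorphisms of $s_0^m v$ in $h\Gamma_mX$; verifying this uses only that every simplicial operator acts functorially on $Mor(h\Gamma X)$ and sends $s_0^n v$ to $s_0^m v$. As $G_v$ is a simplicial group it is a Kan complex, so $p$ is isomorphic over $\Delta^n$ to the projection $\Delta^n\times G_v\to\Delta^n$, which is a Kan fibration.

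I expect the main obstacle to lie in the connectedness statement for $h\Gamma_mX$: producing the collapsing and linking chains explicitly and checking that they behave well under all faces and degeneracies. This is the combinatorial core of \cite[Prop.~1.3]{Ber95}, and it is precisely what the pathwise-connectedness hypothesis is there to secure; granting it, the identification of $E$ with the trivial bundle $\Delta^n\times G_v$ is formal.

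The second assertion is then a formal consequence. The ``identity'' section $\iota\co X\to Mor(h\Gamma X)$, $x\mapsto\mathrm{id}_x$, is a common section of $\pi_s$ and $\pi_t$, and it is a weak equivalence — indeed $Mor(h\Gamma X)$ deformation retracts onto the subcomplex $\iota(X)$ by shrinking each prism to the identity on its source. Hence $Mor(h\Gamma X)$ is a path object for $X$, so a lifting $H$ with $\pi_s\circ H=f$ and $\pi_t\circ H=g$ is a right homotopy from $f$ to $g$, and geometric realization turns it into a homotopy $|f|\simeq|g|$. Alternatively one composes $|H|$ with the map $h\varrho$ of the previous section and traverses the resulting family of Moore paths, which starts along $|f|$ and ends along $|g|$.
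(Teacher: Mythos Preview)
The paper does not prove this theorem at all; it explicitly says ``we shall not prove here'' and refers to \cite[Prop.~1.3]{Ber95}. So there is no proof in the paper to compare against, and your proposal must stand on its own.

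Your argument for the Kan fibration is sound. Pulling back along a simplex, showing that each $h\Gamma_mX$ is a connected groupoid (this is exactly where pathwise connectedness is used), and then conjugating by fixed prisms $c_\sigma,c_\tau$ to trivialise the pullback as $\Delta^n\times G_v$ is a clean reduction; since $G_v$ is a simplicial group it is Kan, and the lifting follows. This is essentially Berger's strategy.

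The second assertion, however, has a genuine gap. Your claim that $Mor(h\Gamma X)$ ``deformation retracts onto $\iota(X)$ by shrinking each prism to the identity on its source'' is not a construction: a prism is a finite composite of elementary prisms, and there is no evident simplicial homotopy that truncates such composites coherently across faces and degeneracies, nor is there an obvious topological retraction on the realization compatible with the identifications. Your alternative via $h\varrho$ is also incomplete: $h\varrho$ lands in the quotient $h|X|^{[0,\infty)}$, where Moore paths are identified up to insertion or deletion of back-and-forth segments, and the evaluation map $|X|^{[0,\infty)}\times[0,1]\to|X|$ does not descend to this quotient, so you cannot simply ``traverse'' an equivalence class of paths. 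Finally, note that you cannot rescue the path-object claim by appealing to the contractibility of $|PX_v|$: in this very paper that contractibility (Theorem~\ref{thm:main5}) is proved \emph{using} the second assertion of Theorem~\ref{thm:berger1}, so that route is circular. Berger's own argument for the second part is a direct, constructive passage from the prismatic data to an explicit homotopy; your sketch would need either that construction or an independent proof that the quotient $|Mor(\Gamma X)|\to|Mor(h\Gamma X)|$ is a weak equivalence before the Moore-path idea can be completed.
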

The homotopy in the theorem above is called \emph{prismatique} in \cite{Ber95}.  Now let $v\in X_0$ be a given vertex, where the same notation shall be used for its image $v\in X_n$ under degeneracy maps, $n\geq 1$. Consider the simplicial subset $PX_v\subset Mor(h\Gamma X)$ whose $n$-simplices ${P}_nX_v$ is the collection of $\gamma_s\in Mor(h\Gamma_nX)$ whose target is $v\in X_n$, i.e., $\pi_t(\gamma_s)=v$, in which $\Omega X_v\subset PX_v$ is simplicial subset consisting of prisms with source $v$. Similarly we denote the Moore paths and loops with target $v$ by $P|X_v|, \Omega |X_v|\subset h|X|^{[0,+\infty)}$, respectively.  Clearly  $\Omega X_v$ is a simplicial group acting on $PX_v$ from the right. As a conclusion, we have a commutative diagram
\begin{equation}
\begin{CD}
            |\Omega X_v| @>>> |P X_v| @>|\pi_s|>> |X| \\
               @V h\varrho VV                  @Vh\varrho VV             @V=VV\\
            \Omega |X_v| @>>> P |X_v| @>|\pi_s|>> |X|.
\end{CD}\label{diag:Omega}
\end{equation}
\begin{thm}\label{thm:main5}
The space $|PX_v|$ is contractible. Consequently the morphism \[h\varrho\co|\Omega X_v|\to  \Omega |X_v| \] of topological groups induces a weak homotopy equivalence.
\end{thm}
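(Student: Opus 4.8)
The plan is to prove the two assertions separately: the contractibility of $|PX_v|$ is the substantive point, while the statement about $h\varrho$ then follows formally from a comparison of two fibrations over $|X|$.

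For the contractibility of $|PX_v|$ I would produce an explicit contraction onto the vertex $\mathrm{id}_v\in\Omega_0X_v\subset P_0X_v$. The underlying geometric idea is the classical ``shrink a path toward its endpoint'': a prism $\gamma_s\in P_nX_v$ encodes, via $\varrho$ in \eqref{def:f1} and the parametrization \eqref{def:xii}, a Moore path from $x_n$ to $v$, and sliding the source point along this path toward $v$ (i.e.\ traversing less and less of $\gamma_s$) deforms $\gamma_s$ to $\mathrm{id}_v$ without ever changing the target; because we work in the groupoid $h\Gamma X$, the partially traversed pieces are again legitimate prisms once \eqref{rel:xi} is imposed, so the deformation stays inside $|PX_v|$. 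Concretely this should be packaged either as an extra degeneracy $s_{-1}\co P_nX_v\to P_{n+1}X_v$ compatible with the face and degeneracy functors of $\Gamma X$ --- whence the standard extra-degeneracy lemma gives a simplicial contraction of $PX_v$ --- or as an explicit simplicial homotopy $PX_v\times\Delta^1\to PX_v$ from the identity to the constant map at $\mathrm{id}_v$. Checking the requisite simplicial identities (equivalently, the well-definedness and continuity of the sliding homotopy modulo \eqref{rel:xi}) is where the real work lies, and is the step I expect to be the main obstacle; it is carried out in \cite{Ber95}.

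Granting that $|PX_v|$ is contractible, the second assertion is formal. Restricting Berger's Kan fibration $(\pi_s,\pi_t)\co Mor(h\Gamma X)\to X\times X$ (Theorem \ref{thm:berger1}) along the inclusion $X\cong X\times\{v\}\hookrightarrow X\times X$ exhibits $\pi_s\co PX_v\to X$ as a Kan fibration with fiber $\Omega X_v$ over $v$; by Quillen's theorem that the realization of a Kan fibration is a Serre fibration, $|\Omega X_v|\to|PX_v|\xrightarrow{|\pi_s|}|X|$ is a Serre fibration. The bottom row of \eqref{diag:Omega}, $\Omega|X_v|\to P|X_v|\xrightarrow{|\pi_s|}|X|$, is the based Moore path fibration --- here the strict associativity of Moore loops is exactly what makes $|\pi_s|$ a genuine fibration --- and $P|X_v|$ is contractible by the usual deformation retraction of the based Moore path space onto the constant path at $v$ (reparametrize $\gamma$ by $t\mapsto\min(t+u\rho(\gamma),\rho(\gamma))$, $u\in[0,1]$).

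Thus \eqref{diag:Omega} is a morphism of fibration sequences which is the identity on the base $|X|$ and has contractible total spaces in both rows. Comparing the two long exact sequences of homotopy groups, the connecting homomorphisms give isomorphisms $\pi_k(|\Omega X_v|)\cong\pi_{k+1}(|X|)$ and $\pi_k(\Omega|X_v|)\cong\pi_{k+1}(|X|)$ for every $k$, compatibly with $(h\varrho)_\ast$; by the five lemma $(h\varrho)_\ast$ is an isomorphism in all degrees, so $h\varrho\co|\Omega X_v|\to\Omega|X_v|$ is a weak homotopy equivalence.
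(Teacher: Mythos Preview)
Your deduction of the weak homotopy equivalence from the contractibility of $|PX_v|$ matches the paper's argument exactly: both rows of \eqref{diag:Omega} are Serre fibrations (the top one via Theorem~\ref{thm:berger1} and Quillen's theorem, the bottom one classically), both total spaces are contractible, and the five lemma on the long exact sequences finishes.

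For the contractibility of $|PX_v|$ itself, your underlying geometric idea---sliding the source of a prism along the prism toward $v$---is the right one, but the paper packages it differently from what you propose. You aim for a global simplicial contraction via an extra degeneracy $s_{-1}$ or a simplicial homotopy $PX_v\times\Delta^1\to PX_v$. The paper instead proves only that all homotopy groups of $|PX_v|$ vanish: given a spheroid $\zeta\co S^n\to |PX_v|$, simplicial approximation replaces it by a simplicial map, and then Berger's section $s_X\co PX_v\to PPX_v$ (\cite[Lemma~2.5]{Ber95}) produces a lift with $\pi_s\circ s_X\zeta=\zeta$ and $\pi_t\circ s_X\zeta=\mathrm{id}_v$; Theorem~\ref{thm:berger1} then converts this prismatic homotopy into a genuine null-homotopy of $|\zeta|$. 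Your route, if the extra degeneracy can be written down and the identities verified, would yield an honest simplicial contraction in one stroke; the paper's route avoids producing any global formula by reusing Theorem~\ref{thm:berger1} as a black box, at the price of working spheroid-by-spheroid and invoking simplicial approximation (and implicitly Whitehead's theorem to pass from weak contractibility to contractibility).
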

\begin{proof}
The first row of \eqref{diag:Omega} is a Serre fibration since it is induced from the Kan fibration $(\pi_s,\pi_t)$, by Theorem \ref{thm:berger1}. 
Together with the well-known result that the second row is a Serre fibration, $|\Omega X_v|\to  \Omega |X_v|$ is a weak homotopy equivalence, once we show that all homotopy groups of $|PX_v|$ vanish.  Let $\zeta\co S^n\to |P X_v|$ be a continuous map representing an element in the homotopy group. By simplicial approximation, up to homotopy $\zeta$ is represented by a simplicial map from a triangulation of $S^n$ to $PX_v$.

Berger \cite[Lemma 2.5, pp. 19--20]{Ber95} proved that there exists a section $s_X\co PX_v\to PPX_v$, so that $s_X \zeta\in PPX_v$ with $\pi_ss_X\zeta=\zeta$ and $\pi_ts_X\zeta=\mathrm{id}_v$. By Theorem \ref{thm:berger1}, it gives the desired homotopy from $|\zeta|$ to $|\mathrm{id}_v|$. 
\end{proof}

\section{Kan's groups and twisting functions}\label{sec:3}
Let $G$ be a simplicial group acting on a simplicial set $Z$ from the left. Recall that in a twisted Cartesian product $X\times_{\tau} Y$ with a twisting function $\tau: X\to G$ of degree $-1$, for $(x,y)\in X_n\times Z_n$, $n\geq 1$, we have $d_i(x,z)=(d_ix,d_iz)$ (resp. $s_i(x,z)=(s_ix,s_iz)$) for $i=0,\ldots, n-1$ (resp. $i=1,\ldots,n$) and $d_n(x,y)=(d_nx, \tau x\cdot d_nz)$, which satisfies $d_{n-1}\tau x=(\tau d_n x)^{-1}(\tau d_{n-1}x)$ and $\tau s_n x=\mathrm{1}_n$. When $Z=G$, $G$ acts on $X\times_\tau G$ on which $G$ from the right in an obvious way. 

A \emph{pseudosection} $\iota\co X\to E$ of an epimorphism $\pi\co E\to X$ is a map such that the composition $\pi\iota=\mathrm{id}_x$, where on face and degeneracy maps we have $\iota s_i=s_i\iota$ for all $i$ and $d_i\iota=\iota d_i$ for $0\leq i<n$, while $d_n\iota x=\iota d_nx\cdot\tau x $ for all $x\in X_n$.
\begin{thm}[\cite{Ber95}]\label{thm:main3}
Let $X$ be a reduced simplicial set with base point $v$. The path fibration $\pi_s\co P X_v\to X$ admits a pseudosection $\iota\co X\to PX_v$ such that $PX_v$ is isomorphic to $X\times_\tau \Omega X_v$ with  $\tau x=(\iota d_nx)^{-1} (x,n-1) (\iota d_{n-1}x)$ for all $x\in X_n$, $n\geq 1$.

The simplicial subgroup $GX=(G_nX)_{n=0}^{\infty}\subset \Omega X_v$, in which every group $G_nX$ is generated by elements $\tau x$, as $x$ runs through $X_{n+1}$ which is not of the form $x=s_ny$ for some $y\in X_n$, is a free group with these generators.
\end{thm}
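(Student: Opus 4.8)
The plan is to prove the two assertions in turn: first build the pseudosection $\iota$ and recognise the resulting twisted product, then exploit that $h\Gamma_nX$ is a \emph{free} groupoid to pin down a free basis of $GX$.

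I would construct $\iota$ by induction on degree. Since $(\pi_s,\pi_t)\colon Mor(h\Gamma X)\to X\times X$ is a Kan fibration (Theorem~\ref{thm:berger1}) and $PX_v$ is its pullback along $X\times\{v\}\hookrightarrow X\times X$, the projection $\pi_s\colon PX_v\to X$ is a Kan fibration. Put $\iota v=\mathrm{id}_v$ (legitimate since $X$ is reduced), and on degenerate simplices set $\iota s_jy=s_j\iota y$, which is forced and consistent by the simplicial identities together with the inductive relations $\iota s_i=s_i\iota$, $d_i\iota=\iota d_i\ (i<n)$. For non-degenerate $x\in X_n$, the prisms $\iota d_0x,\dots,\iota d_{n-1}x$ form a horn $\Lambda^n_n\to PX_v$ lying over $x\colon\Delta^n\to X$; a filler $\iota x$ over $x$ gives $\pi_s\iota x=x$ and $d_j\iota x=\iota d_jx$ for $j<n$, and one sets $\tau x:=(\iota d_nx)^{-1}\cdot d_n\iota x$, so $d_n\iota x=\iota d_nx\cdot\tau x$ and $\iota$ is a pseudosection. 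Using Berger's \emph{explicit} choice of filler one then checks that $\tau x$ lands in $\Omega_{n-1}X_v$ with the stated value $(\iota d_nx)^{-1}(x,n-1)(\iota d_{n-1}x)$, and that $\tau$ obeys the twisting-function identities (this is exactly the set of simplicial identities among the faces $d_i\iota x$). The isomorphism is then $\Phi\colon X\times_\tau\Omega X_v\to PX_v$, $(x,\gamma)\mapsto\iota x\cdot\gamma$, with inverse $\eta\mapsto(\pi_s\eta,\ (\iota\pi_s\eta)^{-1}\eta)$: these are mutually inverse bijections in each degree and both simplicial — compatibility with degeneracies and with $d_j$, $j<n$, is immediate from the pseudosection relations, while $d_n(\iota x\cdot\gamma)=d_n\iota x\cdot d_n\gamma=\iota d_nx\cdot(\tau x\cdot d_n\gamma)$ is precisely the twisted top face $d_n(x,\gamma)=(d_nx,\tau x\cdot d_n\gamma)$, and the right $\Omega X_v$-actions agree by construction.

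For the freeness statement I would first observe that, by its presentation, $h\Gamma_nX$ is the free groupoid on the graph $\mathcal G_n$ with vertex set $X_n$ and edge set $\{(\xi,i):\xi\in X_{n+1}\setminus s_iX_n,\ 0\le i\le n\}$: the relations \eqref{rel:xi} merely formally invert the edges, and the relations $(s_iy,i)=\mathrm{id}$ delete exactly the degenerate generators. As $X$ is pathwise connected, $\mathcal G_n$ is connected, so its vertex group $\Omega_nX_v$ is a free group. Since $\tau s_ny=1$ (a one-line consequence of the pseudosection relations and $d_{n+1}s_n=\mathrm{id}$), there is a homomorphism $\bar\tau\colon F_n\to\Omega_nX_v$, $[\xi]\mapsto\tau\xi$, from the free group $F_n$ on the symbols $[\xi]$, $\xi\in X_{n+1}\setminus s_nX_n$, with image $G_nX$. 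To prove $\bar\tau$ injective — hence $G_nX$ free on $\{\tau\xi\}$ — I would build a retraction: there is a unique groupoid morphism $R\colon h\Gamma_nX\to BF_n$ to the one-object groupoid with $R(\xi,n)=[\xi]$ for $\xi\notin s_nX_n$ and $R(\xi,i)=1$ for $i<n$. The key input, for which one really needs Berger's concrete construction of $\iota$, is that each path $\iota w$, $w\in X_n$, can be written using only elementary prisms $(\xi,i)^{\pm1}$ with $i\le n-1$; then $R(\iota w)=1$, so
\[
R\bigl(\bar\tau[\xi]\bigr)=R\bigl((\iota d_{n+1}\xi)^{-1}(\xi,n)(\iota d_n\xi)\bigr)=R(\iota d_{n+1}\xi)^{-1}\,[\xi]\,R(\iota d_n\xi)=[\xi],
\]
i.e.\ $R|_{\Omega_nX_v}\circ\bar\tau=\mathrm{id}_{F_n}$. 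Therefore $\bar\tau$ is injective, $G_nX$ is free on $\{\tau\xi:\xi\in X_{n+1}\setminus s_nX_n\}$, and matching these identifications degreewise exhibits $GX$ as Kan's loop group.

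The main obstacle sits in two interlocking places, both of which require the explicit prism formulas \eqref{def:xii}--\eqref{def:xiiii} rather than abstract horn filling: first, choosing the filler $\iota x$ so that $\tau x$ is literally $(\iota d_nx)^{-1}(x,n-1)(\iota d_{n-1}x)$; and second, the claim that the staircase $\iota w$ avoids every top-type elementary prism $(\cdot,n)$ — this is exactly what makes $R$ restrict as required and is the heart of the freeness assertion. The remaining verifications — the simplicial identities, bijectivity and simpliciality of $\Phi$, and that $h\Gamma_nX$ carries no relations beyond the listed ones — are routine bookkeeping.
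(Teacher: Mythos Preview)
Your argument is correct. For the isomorphism $PX_v\cong X\times_\tau\Omega X_v$, you and the paper do the same thing: define $\Phi(x,\gamma)=\iota x\cdot\gamma$ and its inverse $\eta\mapsto(\pi_s\eta,(\iota\pi_s\eta)^{-1}\eta)$, and check these are mutually inverse simplicial maps. The only difference is that you first motivate $\iota$ via abstract horn-filling along the Kan fibration $\pi_s$ before conceding that Berger's explicit staircase formula \eqref{def:jx} is needed anyway; the paper skips the detour and writes down \eqref{def:jx} immediately, then verifies the pseudosection identities $d_i\iota=\iota d_i$ ($i<n$), $d_n\iota x=\iota d_nx\cdot\tau x$, $s_i\iota=\iota s_i$ by direct inspection of the indices. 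Your horn-filling paragraph is correct but redundant, since (as you acknowledge) neither the specific form $\tau x=(\iota d_nx)^{-1}(x,n-1)(\iota d_{n-1}x)$ nor the freeness argument can be extracted from an arbitrary filler.

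For the freeness of $G_nX$ your retraction argument is genuinely more explicit than what the paper offers. The paper's proof of the second statement is a single sentence: it observes that in the explicit prism decomposition \eqref{def:taux} of $\tau x$, the only elementary prism of top index is the central $(x,n-1)$ (the ``key step'' $[0,\dots,n-2,n-1]_x\to[0,\dots,n-2,n]_x$), and that the relations in $h\Gamma X$ are exhausted by \eqref{rel:xi} together with $(s_iy,i)=\mathrm{id}$. Your construction packages exactly this observation into a groupoid morphism $R\colon h\Gamma_nX\to BF_n$ killing all edges $(\xi,i)$ with $i<n$ and sending $(\xi,n)\mapsto[\xi]$; the fact that the staircase $\iota w$ uses only indices $\le n-1$ then gives $R\circ\bar\tau=\mathrm{id}_{F_n}$ and hence freeness on the nose. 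What you gain is a clean, checkable argument that makes the role of the free-groupoid presentation transparent; what the paper's sketch gains is brevity. The underlying content---that $\iota$ avoids top-index prisms while $\tau$ introduces exactly one---is identical in both.
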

We begin with the construction of the pseudosection $\iota$. Henceforth, we represent an element $x\in X_n$ by $[0,1,\cdots,n]_x$, the image of the non-degenerate $n$-simplex of $x: \Delta^n\to X$ by the Yoneda Lemma. In this way 
\[s_ix=[0,1,\cdots, i-1, i,i, i+1,\cdots, n]_x,\] 
as a tuple with $n+1$ entries, and 
\[d_ix=[0,1,\cdots, i-1,i+1,\cdots,n]_x\] being a tuple with $n-1$ entries, for $0\leq i\leq n$; the elementary prism $(\xi,i)$ (see \eqref{def:xii}) for $\xi=[0,1,\cdots,n+1]_{\xi}$ induces the morphism
\[
\begin{CD}
[0,1,\cdots, i-1,i,i+2,\cdots,n+1]_{\xi}@>(\xi,i)>> [0,1,\cdots, i-1,i+1,i+2,\cdots,n+1]_{\xi}.
\end{CD}
\]
We connect $x\in X_n$ to the base point by a prism $\iota x\in Mor(h\Gamma_n X)$,
\begin{align}
\iota x=&[0,\cdots, n-2,n-1,n]_x  \to [0,\cdots, n-2,n,n]_x\to   \label{def:jx}\\
    &[0,\cdots, n-3,n,n,n]_x\to \cdots \to [0,n,\cdots,n]_x \to [n,\cdots,n]_x, \nonumber
\end{align}
where the last $n$-simplex is $v$ since $X$ is reduced. It can be explicitly written it down as a concatenation of elementary prisms
\begin{align*}
&(s_nx,n-1)(s_{n}s_{n-1}d_{n-1}x,n-2)\cdots (s_{n}\cdots s_{n-k+2}s_{n-k+1}d_{n-k+1}\cdots d_{n-2}d_{n-1}x,n-k) \\
&\cdots (s_{n}\cdots s_{1}d_1\cdots d_{n-1}x,0).
\end{align*}

Now we define 
\begin{align}
\tau x&=(\iota d_nx)^{-1}(x,n-1) (\iota d_{n-1}x) \nonumber \\
&=[n-1,\cdots,n-1]_x\to [0,n-1,\cdots,n-1]_x\to \cdots\to [0,1,\cdots,n-2,n-1]_x \label{def:taux}\\
&\to [0,1,\cdots,n-2,n]_x\to[0,1,\cdots,n-3,n,n]_x\to\cdots\to[0,n,\cdots,n]_x\to [n,n,\cdots,n]_x \nonumber
\end{align}
as an element in $\Omega_{n-1}X_v$. 

It remains to check the formulas involving the face and degeneracy maps. The relations $d_i\iota =\iota d_i$ are clear for $0\leq i\leq n-1$, by \eqref{def:CD1}, \eqref{def:jx}. On the other hand, a comparison of 
\[
d_n\iota x=[0,\cdots, n-2,n-1]_x \to [0,\cdots, n-2,n]_x\to [0,\cdots, n-3,n,n]_x \to \cdots \to [n,n,\cdots,n]_x
\]
and
\[
\iota d_nx=[0,\cdots, n-2,n-1]_x \to [0,\cdots, n-3,n-1,n-1]_x \to \cdots \to [n-1,n-1,\cdots,n-1]_x
\]
shows that $d_n\iota x=(\iota d_nx)(\tau x)$, as desired. Similarly we verify $s_i\iota x=\iota s_ix$ for all $i$. On $\tau x$, for $i<n-1$ we have already proved $d_i\iota =\iota d_i$, then
\begin{align*}
d_i\tau x&=d_i\left((\iota d_nx)^{-1}(x,n-1)(\iota d_{n-1}x)\right)=(d_i\iota d_nx)^{-1}d_i(x,n-1)(d_i\iota d_{n-1}x)\\
&=(\iota d_id_nx)^{-1}(d_ix,n-2)(\iota d_id_{n-1}x)=
(\iota d_{n-1}d_ix)^{-1}(d_ix,n-2)(\iota d_{n-2}d_ix)=\tau d_ix,
\end{align*}
in which $d_i(x,n-1)=(d_ix,n-2)$ since they both give 
\[
[0,\ldots,i-1,i+1,\ldots,n-1]_x\to [0,\ldots,i-1,i+1,\ldots,n]_x,
\] and  
\begin{align*}
d_{n-1}\tau x&=d_{n-1}\left((\iota d_nx)^{-1}(x,n-1)\iota d_{n-1}x\right)=(d_{n-1}\iota d_nx)^{-1}\mathrm{id}_{d_{n-1}d_nx}(d_{n-1}\iota d_{n-1}x)\\
&=
((\iota d_{n-1}d_nx) (\tau d_nx))^{-1}(\iota d_{n-1}d_{n-1}x)(\tau d_{n-1}x)=(\tau d_nx)^{-1}(\tau d_{n-1}x)
\end{align*}
in which we have used $d_{n-1}\iota x'=(\iota d_{n-1}x')(\tau x')$ for $x'\in X_{n-1}$ and 
\[d_{n-1}(x,n-1)=[0,\ldots,n-2]_x\to [0,\ldots,n-2]_x=\mathrm{id}_{d_{n-1}d_nx}.\] 
 Finally, from \eqref{def:taux} we have 
$\tau s_n x=1_n$ since $s_nx=[0,1,\ldots,n-1,n,n]_x$; the relations $s_i\tau =\tau s_i$ with $0\leq i\leq n-1$ hold from
\[
s_i\tau x=(\iota s_id_nx)^{-1} (s_ix, n) (\iota s_i d_{n-1}x)=(\iota d_{n+1} s_{i}x)^{-1}(s_ix, n) (\iota  d_{n} s_ix)=\tau s_ix.
\]
\begin{proof}[Proof of Theorem \ref{thm:main3}]
The pseudosection $\iota\co X\to PX_v$ defined by \eqref{def:jx} gives a simplicial map $f\co X\times_\tau \Omega X_v\to P X_v$ that sends $(x,g)$ to $\iota x\cdot g $, preserving the $\Omega X_v$ actions from the right.  On the other hand, we construct $f': P X_v\to X\times_\tau \Omega X_v$ by sending an  $n$-prism $\gamma_s$ with source $\pi_s(\gamma_s)$ and target $\pi_t(\gamma_s)=v$,  to $(\pi_s(\gamma_s), (\iota\pi_s(\gamma_s))^{-1}\gamma_s)$.  It is easily checked that $f' (\iota x\cdot g)=(x, g)$ and $f(f'(\gamma_s))=\gamma_s$, therefore $f'$ is the inverse of $f$, as morphisms of simplicial sets.

The second statement holds by checking the elementary prisms that appear in the definition of $\tau x$ (see \eqref{def:taux}, in which the key step is $[0,\ldots, n-2,n-1]_x\to [0,\ldots, n-2,n]_x$, and the definition of $h\Gamma X$ that all relations come essentially from \eqref{rel:xi} and $(s_{n}y,n)=1$, $y\in X_n$. 
\end{proof}
As a conclusion, we have a well-defined simplicial map $X\times_\tau GX\to X\times_\tau \Omega X_v$ induced by the inclusion $GX\to \Omega X_v$ and the identity on $X$, which is a morphism of twisted cartesian products over $X$. The contractibility of $PX_v$, as well as that of $X\times_\tau GX$ which is a well-known theorem of Kan \cite{Kan58a}, imply the following, after a comparison of the long exact sequences of homotopy groups.
\begin{cor}\label{thm:Berger}
The inclusion $GX\to \Omega X_v$ of simplicial groups induces a weak homotopy equivalence.
\end{cor}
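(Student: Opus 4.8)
The plan is to derive Corollary \ref{thm:Berger} purely formally from the two contractibility statements already in hand: that $|PX_v|$ is contractible by Theorem \ref{thm:main5}, and that $|X \times_\tau GX|$ is contractible by the cited theorem of Kan \cite{Kan58}. The key observation is that both $PX_v \to X$ and $X \times_\tau GX \to X$ are twisted cartesian products over the same base $X$, and that we have already constructed (in the discussion immediately preceding the corollary) a morphism $j \co X \times_\tau GX \to X \times_\tau \Omega X_v = PX_v$ of twisted cartesian products over $X$, induced by the group inclusion $\iota \co GX \to \Omega X_v$ together with $\mathrm{id}_X$. Upon geometric realization this is a map of Serre fibrations over $|X|$ covering the identity, since realization preserves the relevant fibration property (here one uses that the realization of a twisted cartesian product is a quasifibration, or more directly that $|X \times_\tau G| \to |X|$ is a fiber bundle with structure group $|G|$ for a simplicial group $G$).

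First I would set up the map of fibration sequences. Realizing $j$ over the identity of $|X|$ and restricting to a fiber over the base point gives a commutative ladder
\[
\begin{CD}
|GX| @>>> |X \times_\tau GX| @>>> |X|\\
@V|\iota| VV @VV|j|V @VV=V\\
|\Omega X_v| @>>> |PX_v| @>>> |X|.
\end{CD}
\]
Both rows are Serre fibrations (top: since $GX$ is a simplicial group acting on $X \times_\tau GX$ and realization of a principal twisted cartesian product is a bundle; bottom: this is the left column of diagram \eqref{diag:Omega}, already known to be a Serre fibration in the proof of Theorem \ref{thm:main5}). The fibers over $v$ are $|GX|$ and $|\Omega X_v|$ respectively, and the left vertical map is exactly $|\iota|$.

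Next I would compare the two long exact sequences of homotopy groups. Since $|X \times_\tau GX|$ and $|PX_v|$ are both contractible, the boundary maps $\partial \co \pi_{k+1}(|X|) \to \pi_k(\text{fiber})$ are isomorphisms in each row, for all $k \geq 0$, and the naturality of the long exact sequence with respect to the morphism of fibrations gives a commuting square
\[
\begin{CD}
\pi_{k+1}(|X|) @>\partial>> \pi_k(|GX|)\\
@V=VV @VV(|\iota|)_*V\\
\pi_{k+1}(|X|) @>\partial>> \pi_k(|\Omega X_v|).
\end{CD}
\]
Since the two horizontal maps are isomorphisms and the left vertical map is the identity, $(|\iota|)_* \co \pi_k(|GX|) \to \pi_k(|\Omega X_v|)$ is an isomorphism for every $k \geq 0$; hence $|\iota|$ is a weak homotopy equivalence, as claimed. (One should note the mild technical point that the maps respect basepoints and that all spaces in sight are path-connected when $X$ is reduced, so there are no issues with $\pi_0$ or with the action of $\pi_1$ on higher homotopy; this is where reducedness of $X$ is used.)

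The main obstacle, as I see it, is not the exact-sequence chase — that is routine — but justifying that the top row $|GX| \to |X \times_\tau GX| \to |X|$ is genuinely a Serre fibration, i.e., that geometric realization carries the twisted cartesian product $X \times_\tau GX$ to a fibration over $|X|$ with fiber $|GX|$. The cleanest route is to invoke that for a simplicial group $G$ acting principally, $|X \times_\tau G| \to |X|$ is a locally trivial fiber bundle with structure group the topological group $|G|$ (this is classical, going back to the work cited in the introduction and to May), so it is in particular a Serre fibration; alternatively one can cite that the realization of any Kan fibration, or of any twisted cartesian product, is a Serre fibration. With that in place the comparison of long exact sequences closes the argument.
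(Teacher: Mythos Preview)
Your proof is correct and follows exactly the argument the paper gives: construct the morphism $X\times_\tau GX \to X\times_\tau \Omega X_v = PX_v$ of twisted cartesian products over $X$, use contractibility of both total spaces (Kan for the first, Theorem~\ref{thm:main5} for the second), and compare the long exact sequences of homotopy groups. The paper states this in one sentence just before the corollary; your additional discussion of why the realized rows are Serre fibrations is a welcome elaboration but not a departure.
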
   



Originally, a twisting function $\tau$ defined by Kan \cite[Page 293]{Kan58a} satisfies 
\begin{equation} d_{n-1}\tau x= (\tau d_{n-1}x) (\tau d_n x)^{-1}.\label{def:Kan}\end{equation}
This can be done by considering concatenations of Moore paths as compositions of morphisms, more precisely, the concatenation $a\cdot b$ of Moore paths becomes 
\[b\circ a=\bullet\stackrel{a}{\rightarrow} \bullet \stackrel{b}{\rightarrow} \bullet.\]  In this way we reverse the order of all possible products elementary prisms, resulting in \eqref{def:Kan}.

Alternatively, one could define $PX_v\subset h\Gamma X$ as the prisms beginning at the vertex $v$, on which $\Omega X_v$ acts from the left. In this case the pseudosection $\iota: X\to PX_v$ has the form
\begin{align}
\iota x=&[0,\cdots,0]_x\to [0,0,\cdots,0,n]_x\to [0,\cdots,0,n-1,n]\to\cdots\to \\
     & [0,0,0,3,\cdots, n]_x
\to [0,0,2,\cdots, n-1,n]_x  \to [0,1,2,\cdots, n-1,n]_x, \nonumber
\end{align}
namely 
\[
  (s_{n-1}\ldots s_1s_0d_1\ldots d_{n-1} x,n)  \ldots (s_ks_{k-1}\ldots s_1s_0d_1d_2\ldots d_k x,k+1)\ldots (s_1s_0d_1x,2)(s_0x,1)
\]
as a product of elementary prisms.
 Similarly, we define
\begin{align*}
\tau x=&(\iota d_1x)(x,0)(\iota d_{0}x)^{-1}=[0,\cdots,0]_x\to[0,\cdots,0,n]_x\to\cdots\to [0,2,\cdots,n]_x\\
 &\to [1,2,\cdots, n]_x\to [1,1,3,\cdots,n]_x\to\cdots\to [1,\cdots,1]_x,
\end{align*}
implying that $\tau s_0x=1_n\in \Omega X_v$ since $s_0x=[0,0,1,\ldots,n]_x$. It can be checked straightforwardly that $s_i\iota =\iota s_i$ for all $i$ and $d_i\iota =\iota d_i$ for $1\leq i\leq n$.  On $d_0$ we have
\begin{align*}
d_0\iota x&=[0,\cdots,0]_x\to [0,\cdots,0, n]_x\to\cdots\to [0,2,\cdots,n]_x\to [1,2,\cdots,n]_x\\
\iota d_0x&=[1,\cdots,1]_x\to[1,\cdots,1, n]_x\to\cdots\to[1,1,3,\cdots,n]_x\to [1,2,\cdots,n]_x,
\end{align*}
whence the relation $d_0\iota x=(\tau x)(\iota d_0x)$. We omit the parallel verification of the relations $d_i\tau =\tau d_{i+1},$ for $i=1,\ldots,n-1$ and $d_0\tau x=(\tau d_1 x) (\tau d_0 x)^{-1}$, as well as $s_{i}\tau x=\tau s_{i+1}x$ for $i\geq 1$. 

The axioms of face and degeneracy maps on $\tau$ coincide with those in Curtis \cite[Page 133]{Cur71}, which enable us to define a twisting $\tau\co X\to G$ for a simplicial group $G$, and fiber bundles of the form $Y\times_\tau X$ as a twisted cartesian product, $G$ acting on the simplicial set $Y$ from the right, in which $d_0(y,x)=(d_0y \cdot \tau x, d_0x)$ for all elements $(y,x)$ of dimension $\geq 1$. As another version of Theorem \ref{thm:main3}, we have the identification 
\[
   PX_v= \Omega X_v\times_{\tau} X.
\]

Again, using compositions of morphisms instead of concatenations of Moore paths, the order of  elementary prisms in $\iota x$ and $\tau x$ shall be inverted, and we get $d_0\tau x=(\tau d_0 x)^{-1}( \tau d_1x)$.



\section{Singular Cubes in $GX$}\label{sec:4}
The geometric realization $|GX|$ of the simplicial group is a topological group, whose multiplication $|GX|\times |GX|\to |GX|$ is understood through local charts. More precisely, let $\sigma\in G_pX, \sigma'\in G_{p'}X$ be two simplices represented by simplicial maps $\sigma\co\Delta^p\to GX$, $\sigma'\co\Delta^{p'}\to GX$. Then the composition
\begin{equation}
\begin{CD}
 |\Delta^p|\times |\Delta^{p'}| @>|(\sigma,\sigma')|>> |GX|\times |GX| @>|m |>> |GX|\label{def:monoid1}
 \end{CD}
\end{equation}
gives the monoid structure of $|GX|$, where $m$ is the multiplication of $GX$. 

Notice that by \eqref{def:monoid1} we have a singular cell on $|GX|$ which is not simplicial. A further understanding can be done by the shuffle products.
Let $a,b$ be two letters. We define $\mathrm{Shuf}(a^p,b^q)$, $p,q$ two non-negative integers, as the set of all words of the form 
\begin{equation}
w=x_{p+q}\ldots x_{1}\label{def:w}
\end{equation} 
of length $p+q$, in which exactly $p$ letters are $a$ and $q$ letters are $b$ ($w$ is an empty word if both $p=q=0$). For such a word $w\in \mathrm{Shuf}(a^p,b^q)$, let $\mathrm{sign}(w)$ be the number of permutations of letters to change it into the word $b^qa^p$, which is well-defined $\mathrm{mod}$ $2$.  For instance, $\mathrm{Shuf}(a^2, b)=\{aab, aba, baa\}$ with $\mathrm{sign}(aba)=1$. 

Let $x_b(w)=x_{b_q}\ldots x_{b_{1}}$ (resp. $x_a(w)=x_{a_p}\ldots x_{a_{1}}$) be the subword of $w$ obtained by deleting all letters $x_i$ which are $a$ in $w$ (resp. by deleting all letters $x_i$ which are $b$ in $w$), and let $s_{x_b^-(w)}=s_{b_{q}-1}s_{b_{q-1}-1}\cdots s_{b_1-1}$ (resp. $s_{x_a^-(w)}=s_{a_{p}-1}s_{a_{p-1}-1}\cdots s_{a_1-1}$) be the corresponding iterated degeneracy operators. As an example, if $w=x_3x_2x_1$ with $x_1=x_2=a$ and $x_3=b$, we have $x_b(w)=x_3$, $x_a(w)=x_2x_1$, and $s_{x_b^-(w)}=s_2$, $s_{x_a^-(w)}=s_1s_0$, respectively. Let $GX\to RGX$ be the embedding into the corresponding simplicial group ring with coefficients from a commutative ring $R$. We define the product $\cdot\co RGX\otimes RGX\to RGX$ as the composition 
\begin{equation}
\begin{CD}
RGX\otimes RGX@>\nabla>> R(GX\times GX)@> Rm >> RGX
\end{CD}\label{def:cdot}
\end{equation}
where $RGX\otimes RGX$ is the tensor of graded $R$-modules over $R$, $Rm$ the multiplication induced by that of $GX$ and $\nabla$ the morphism of graded $R$-modules given by
\begin{equation}
  \nabla(\sigma\otimes\sigma')= \sum_{w\in \mathrm{Shuf}(a^p,b^{p'})}(-1)^{\mathrm{sign}(w)} (s_{x_b^-(w)}\sigma, s_{x_a^-(w)}\sigma'),\label{def:nabla}
\end{equation}
$\sigma\in RG_p,\sigma'\in RG_{p'}$, respectively, so that $\nabla(\sigma\otimes\sigma')$ is homogeneous of degree $p+p'$. To avoid confusion, in what follows the multiplication of two elements $g,g'\in G_n$ in the simplicial group $G$ shall be denoted by $gg'$.
It is convenient to use the notation
\begin{equation}
  \begin{array}{cccccccc}
     & x_1 & \ldots &x_i & x_{i+1} &\ldots & x_{p+p'} & \\
  \hline
     &n_0&\ldots &n_{i-1} & n_i &\ldots & n_{p+p'-1} & n_{p+p'}\\ 
     &n_0'&\ldots &n_{i-1} '& n_i' &\ldots & n_{p+p'-1}' & n_{p+p'}'\\ 
  \end{array}\label{def:w1}
\end{equation}
where 
\begin{align}
s_{x_b^-(w)}\sigma&=[n_0,\ldots,n_{p+p'}]_{\sigma}, \ n_0=0, \ n_{i}=\begin{cases} 
n_{i-1}+1 & x_i=a\\
n_{i-1}     & x_i=b,
\end{cases} \\
s_{x_a^-(w)}\sigma'&=[n_0',\ldots,n_{p+p'}']_{\sigma'}, \ n_0'=0, \ n_{i}'=\begin{cases} 
n_{i-1}' & x_i=a\\
n_{i-1}'+1     & x_i=b.
\end{cases}
\end{align}

\subsection{Cubes as unions of simplices in $GX$}
We say that a $CW$ complex $Y$ is \emph{cubical} if any of its characteristic maps, i.e., the attaching of an $n$-cell $e_n$, is represented by a continuous map $\square^n\to |Y|$ from the standard $n$-cube $e_n\co \square^n=[0,1]^n$ onto its image in $|Y|$, whose $i$-th top and bottom faces $d_i^0$ and $d_i^1$, respectively, $i=1,\ldots,n$, are attached by singular cubes $|d_i^te_n|\co\square^{n-1}\to |Y|$, with
\[
  |d_i^te_n|(t_1,\ldots, t_{n-1})=\begin{cases}
  |e_n|(t_1,\ldots, t_{i-1},0, t_{i}, \ldots, t_{n-1}) & t=1\\
  |e_n|(t_1,\ldots, t_{i-1},1, t_{i}, \ldots, t_{n-1}) & t=0,
  \end{cases}
  \ t_i\in[0,1],\ i=1,\ldots, n-1.
\] 

\begin{thm}\label{thm:main0}
Let $X$ be a reduced simplicial set. There exists a cubical complex $|CX|$ as a subcomplex of all singular cubes on $|GX|$, together with  a continuous map $|T|\co |CX|\to |GX|$, which satisfies the following properties. \begin{enumerate}
\item [a)] Every $x\in X_{n+1}$ gives a singular cube $|Tcx|\co \square^n\to |GX|$, whose image is a union of $n!$ simplices in $|GX|$ as its triangulation. If $x$ is degenerate, so is every simplex in this union.
\item [b)] As a topological monoid, $CX$ is generated by $cx$, by identifying $cx$ with the singular cube $|Tcx|$, as $x$ runs through all non-degenerate simplices of dimension $\geq 1$. Moreover, $CX$ is a free monoid with this set of generators.
\item [c)] On the $k$-th top and bottom faces $|d_k^0Tcx|,|d_k^1Tcx|\co \square^{n-1}\to |GX|$, $k=1,\ldots,n$, we have   
\[|d_k^0Tcx|=|Tc[0,\ldots,k]_x|\times |Tc[k,\ldots, n+1]_x|\circ (\pi_1,\pi_2),\quad |d_k^1Tcx|=|Tcd_kx|,
\] 
in which $|Tc[0,\ldots,k]_x|\times |Tc[k,\ldots, n+1]_x|\co \square^{k-1}\times\square^{n-k}\to |GX|$ is the cartesian product of singular cubes $|Tc[0,\ldots,k]_x|,|Tc[k,\ldots, n+1]_x|$, here $(\pi_1,\pi_2)\co\square^{n-1}\to \square^{k-1}\times\square^{n-k}$ is the projection onto its first $k-1$ and last $n-k$ coordinates, respectively.  In other words, in the topological monoid $CX$ we have 
\[
d_k^0cx=c[0,\ldots,k]_x \cdot c[k,\ldots, n+1]_x,\quad d_k^1cx=cd_kx,
\quad k=1,\ldots,n.
\]
\item [d)] The map $|T|$ is functorial with respect to simplicial maps. More precisely, a simplicial map $f: X\to X'$ between reduced simplicial sets $X,X'$ induces a morphism $Cf\co CX\to CX'$ of topological monoids, such that the diagram 
\[
\begin{CD}
                 |CX|@> |T|>> |G X|\\
                 @V|Cf|VV    @V |G f|VV \\
                 |CX'| @>|T|>>| GX'|
\end{CD}
\]
commutes.
\item [e)] The map $|T|\co |CX|\to |GX|$ induces a weak homotopy equivalence when $|X|$ is simply connected. 
\end{enumerate}
\end{thm}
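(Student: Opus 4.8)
The plan is to read (a)--(d) as the definition of $CX$ plus routine verifications, and to put the weight of the proof on (e). Concretely one sets $|Tcx|\co\square^{n}\to|GX|$ to be the singular cube obtained by gluing the $n!$ simplices $|Tcx(g)|$, $g\in S_{n}$, along the standard triangulation of $\square^{n}$, and defines $CX$ as the cubical submonoid of the cubical singular set of $|GX|$ generated by these cubes for $x$ nondegenerate of dimension $\ge 1$; then $|T|\co|CX|\to|GX|$ is the canonical evaluation map, and it is automatically a morphism of topological monoids because $|GX|$ is a topological group, so its cubical singular set is a monoid via $\square^{p}\times\square^{q}=\square^{p+q}$. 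The content of (a)--(d) is then: that the $n!$ simplices agree on shared faces and consist of degenerate simplices when $x$ is degenerate; that $CX$ is free on its generators, by the same reasoning as for the freeness of $GX$ in Theorem \ref{thm:main3} ($\tau x$ is the genuinely new generator among the factors of $Tcx$, the rest being $\tau$'s of proper faces); that $|T|$ is functorial, immediate since each $\tau[0,\alpha_{r1},\ldots,\alpha_{rn},r+1]_x$ is natural in $X$; and the face identities (c), which are the one genuinely combinatorial point --- restricting the permutation sum to the facet ``$k$-th coordinate $=0$'' reorganizes each term into the concatenation $Tc[0,\ldots,k]_x(g')\cdot Tc[k,\ldots,n+1]_x(g'')$ indexed by $(g',g'')\in S_{k-1}\times S_{n-k}$ (a merge identity for the indices $\alpha_{rj}$), while ``$k$-th coordinate $=1$'' collapses each term to the corresponding term of $Tcd_kx$.

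For (e) the idea is to prove first that $|T|$ is an integral homology isomorphism, then upgrade to a weak equivalence by an $H$-space argument. The homology input is the content of Section \ref{sec:4}: matching the product formula for $Tcx$ and the cubical differential of $RCX$ with the Alexander--Whitney coproduct and the internal differential identifies $(RCX,d)$ with Adams's cobar construction on $\bar C_*(X)$ and $T\co RCX\to RGX$ with Adams's comparison map, which is a quasi-isomorphism once $|X|$ is simply connected. Taking $R=\mathbb{Z}$, the normalized cubical chains $\mathbb{Z}CX$ compute $H_*(|CX|;\mathbb{Z})$, the normalized simplicial chains $\mathbb{Z}GX$ compute $H_*(|GX|;\mathbb{Z})$, and the subdivision map $T$ represents $|T|_*$ up to the standard chain homotopy comparing cubical and simplicial singular chains; hence $|T|_*\co H_*(|CX|;\mathbb{Z})\to H_*(|GX|;\mathbb{Z})$ is an isomorphism.

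To finish: $|GX|$ is a path-connected topological group since $\pi_0GX=\pi_1|X|=0$, so $H_0(|CX|;\mathbb{Z})\cong H_0(|GX|;\mathbb{Z})=\mathbb{Z}$ and $|CX|$ is path-connected too. A path-connected topological monoid is an $H$-space, hence a simple --- in particular nilpotent --- space, its fundamental group being abelian and acting trivially on all higher homotopy groups. By the homology Whitehead theorem for nilpotent spaces (a map of nilpotent spaces which is an isomorphism on integral homology is a weak homotopy equivalence), $|T|$ is a weak homotopy equivalence, which is (e).

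The main obstacle is producing the quasi-isomorphism $T$: identifying $(RCX,d)$ with the cobar construction and $T$ with Adams's map, and then running the classical acyclic-models argument that the cobar construction computes loop-space chains for simply connected $X$. If one prefers not to quote Adams, there is a parallel geometric route: since $|T|$ is a morphism of topological monoids it induces $B|T|\co B|CX|\to B|GX|$; one has $B|GX|\simeq|\overline{W}GX|\simeq|X|$, a bar--cobar analysis identifies $B|CX|\simeq|X|$ with $B|T|$ a homotopy equivalence over $|X|$, and since both monoids are grouplike when $|X|$ is simply connected (their $\pi_0$ is trivial) the completion maps $M\to\Omega BM$ are weak equivalences for $M=|CX|,|GX|$, so a diagram chase gives (e). Either way the needed model $|GX|\simeq\Omega|X|$ on the target side is Corollary \ref{thm:Berger} together with Theorem \ref{thm:main5}.
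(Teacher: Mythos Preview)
Your treatment of (a)--(d) matches the paper's: it too defines $|Tcx|$ by gluing the simplices $Tcx(g)$ along the standard triangulation of $\square^n$, verifies that the internal faces agree via \eqref{eq:faces0} compared against \eqref{eq:faces1}, that degeneracy is preserved (Lemma~\ref{lem:degeneracy}), and obtains the face formulas (c) from the explicit combinatorics of Propositions~\ref{prop:d_0} and~\ref{prop:d_n}. The final step in (e), upgrading an integral homology isomorphism of connected topological monoids to a weak equivalence via the homology Whitehead theorem for simple spaces, is likewise the paper's Lemma~\ref{lem:Hatcher}.

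Where you diverge is in producing the homology isomorphism. The paper does \emph{not} invoke Adams; it derives Adams's theorem (Corollary~\ref{cor:Adams}) as a consequence of (e). Its argument is a direct cell-by-cell induction (Proposition~\ref{prop:fil}): attach one nondegenerate simplex to $X'$ to form $X$, filter both $GX$ and $CX$ by word length in the free-product decomposition $G_kX \cong G_kX' * G_kS^{n+1}$, and compare the resulting spectral sequences, whose $E^1$-pages are smash products of copies of $GX'$, $GS^{n+1}$ (respectively $CX'$, $CS^{n+1}$), reducing everything to the case $X=S^{n+1}$. A localization $\widetilde{C}X$ of $CX$ at the degree-$0$ generators is required to handle $1$-simplices (since $|CS^1|\cong\mathbb{Z}_{\ge 0}$ while $|GS^1|\cong\mathbb{Z}$), followed by a separate check that $|CX|\hookrightarrow|\widetilde{C}X|$ is a homology isomorphism under the simple-connectivity hypothesis.

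Your route through Adams has one point that needs genuine work: Adams's comparison map targets $C_*(\Omega|X|)$, not $RGX$, so ``identifying $T$ with Adams's comparison map'' is not literal. Knowing abstractly that $H_*(RCX)\cong H_*(\Omega|X|)\cong H_*(|GX|)$ does not by itself show that $T_*$ is an isomorphism. You must either verify that the composite $RCX \xrightarrow{T} RGX \to C_*(\Omega|X|)$ (the second arrow coming from Theorem~\ref{thm:main5} and Corollary~\ref{thm:Berger}) is chain-homotopic to Adams's map --- a compatibility involving the explicit formula \eqref{def:Tcxg} --- or rerun the acyclic-models/filtration argument directly for $T$, at which point you are essentially carrying out the paper's strategy in different language. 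Your bar-construction alternative sidesteps this but relocates the burden to proving $B|CX|\simeq|X|$ and that $B|T|$ is an equivalence, which is again bar--cobar duality. The paper's approach is longer but self-contained, and it yields Adams's theorem as output rather than taking it as input.
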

As a remark, we describe the monoid structure of $CX$ in more detail. A general $n$-cube in $CX$ is a product  
\begin{equation}
    c(x_1,\ldots, x_l)=cx_1\cdot cx_2\cdot\ldots \cdot cx_l, \quad  \label{def:cx}
\end{equation}
of cubes $cx_1,\ldots, cx_l$ associated to simplices $x_1,\ldots, x_l\in X$ of dimensions $n_1+1,\ldots, n_l+1$, respectively, $\sum_{k=1}^ln_k=n$. Topologically it corresponds to the singular cube $|Tcx_1|\times |Tcx_2|\times \ldots \times |Tcx_l|$, which is well defined since $X$ is reduced, with 
$[n_{k}+1]_{x_{k}}=[0]_{x_{k+1}}$ for $k=1,\ldots, l-1$.
On the top and bottom faces we have  
\begin{equation}
 d_i^tc(x_1,\cdots,x_l)=
                          cx_1\cdot cx_2\cdot \ldots\cdot \left(d_{i-\sum_{k<j}n_k}^t cx_j\right)\cdot\ldots\cdot cx_l, \quad t=0,1,\label{def:face}
\end{equation}
where $j$ is the unique number such that $\sum_{k<j}n_k<i\leq \sum_{k\leq j}n_k$. The homotopy equivalence $|T|\co |CX|\to |GX|$, together with Corollary \ref{thm:Berger} and the well-known fact that we can ignore the degenerate cubes when passing to homology, we obtain the following classic theorem of Adams.
\begin{cor}[\cite{Ada56}]\label{cor:Adams}
   Let $(RCX,d)$ be the differential graded algebra associated to a reduced simplicial set $X$, which is freely generated (as an associative algebra) by $cx$ of degree $n$, as $x=[0,\ldots,n+1]_x$ runs through non-degenerate simplices of dimension $n+1\geq 1$, whose differential satisfies ($dcx=0$ if $n=0$)
\begin{equation}
    dcx=\sum_{i=1}^{n}(-1)^{i}(d_i^0cx-d_i^{1}cx)=\sum_{i=1}^{n}(-1)^i(c[0,\ldots,i]_x \cdot c[i,\ldots,n+1]_x-cd_ix)\label{eq:Ad1}
\end{equation}
where $cd_ix$ (resp. $c[0,\ldots,i]_x$ or $c[i,\ldots,n+1]_x$) vanishes whenever $d_ix$ (resp. $[0,\ldots,i]_x$ or $[i,\ldots,n(x)+1]_x$) is degenerate, and satisfies $d(cx\cdot cx')=d(cx)\cdot cx'+(-1)^{n}cx\cdot d(cx')$ for all generators $cx,cx'$. Then we have an isomorphism 
\[
     H_*(\Omega|X|;R)=H_*(RCX,d)
\]
of graded algebras when $|X|$ is simply connected.
\end{cor}
After a change of basis by $c'x=\begin{cases}
   cx & \dim x>1 \\
   cx-1 & \dim x=1 \\
   0 & \dim x=0,
\end{cases}$
 $(RCX,d)$ is freely generated by $c'x$, where
 the differential \eqref{eq:Ad1} becomes
\begin{equation}
    dc'x=-\sum_{i=0}^{n+1}(-1)^ic'd_ix+\sum_{i=0}^{n+1}(-1)^i c'[0,\ldots,i]_x \cdot c'[i,\ldots,n+1]_x.\label{eq:Ad2}
\end{equation}
\subsection{Triangulations of cubes and symmetric groups}
Let $x=[0,\ldots,n+1]_x$ be an $(n+1)$-simplex, $n\geq 0$, and let $g=(g_1,\ldots, g_n)\in S_n$ be a permutation $g\co \{1,\ldots, n\}\to \{1,\ldots, n\}$, $g_i=g(i)$.  We define 
\begin{equation}
Tcx(g)=\begin{cases}\prod_{r=0}^{n}\tau[0,\alpha_{r1},\alpha_{r2},\ldots, \alpha_{rn},r+1]_x & n\geq 1\\
\tau([0,1]_x)=\tau(x) & n=0
\end{cases} \label{def:Tcxg}
\end{equation}
where 
\begin{equation}
\alpha_{rj}=
\max(\{0,1,\ldots, r\}\cap\{0,g_1,g_2,\ldots, g_j\}), \quad j=1, \ldots,n. \label{def:aij}
\end{equation}
 When $n=0$, $g=()\in S_0=\emptyset$ is a formal notation. The most convenient way to identify  the  simplices in the image of the singular cube $|cx|$ is through the embedding $GX\to RGX$. Then $Tcx\in RGX$ is given by 
\[Tcx=\begin{cases}\sum_{g\in S_n}(-1)^{\mathrm{sign}(g)}Tcx(g) & n\geq 1\\
          \tau(x) & n=0.
           \end{cases}
\]
It is convenient to write the right-hand side of \eqref{def:Tcxg} in the form
\begin{equation}
  \tau\left[\begin{array}{cccccc}
    0 & \alpha_{01} & \alpha_{02} & \ldots & \alpha_{0n} & 1 \\ 
    0 & \alpha_{11} & \alpha_{12} & \ldots & \alpha_{1n} & 2 \\ 
     &  &  &\ldots  &   &   \\ 
    0& \alpha_{r1} & \alpha_{r2} & \ldots & \alpha_{rn} & r+1 \\ 
      &   &   & \ldots &   &   \\ 
    0 & \alpha_{n1} & \alpha_{n2} & \ldots & \alpha_{nn} & n+1 \\ 
  \end{array}\right]_x\label{def:Tcx1}
\end{equation} 
where each row corresponds to an element in $GX$. When the matrix \eqref{def:Tcx1} degenerates to $[0,1]_x$, it is understood that we are in the situation $n=0$. Here are some examples: when $n=1$, $S_1$ has a single element $g=(1)$, and 
\[Tcx(g)=
\tau\left[
\begin{array}{ccc}
  0&0   &1   \\
  0& 1  &2   
\end{array}
\right]_x=
\tau[0,0,1]_x\tau[0,1,2]_x;\] let $g=(2,3,1)\in S_3$, $n=3$, then 
\[
Tcx(g)=\tau 
\left[
\begin{array}{ccccc}
  0& 0  & 0 &0 &1 \\
  0& 0  & 0  &1 &2\\
  0& 2  & 2 & 2& 3\\
  0 & 2& 3&3 &4
\end{array}
\right]_x=
\tau[0,0,0,0,1]_x\tau [0,0,0,1,2]_x\tau [0,2,2,2,3]_x\tau[0,2,3,3,4]_x
\]
for $x=[0,1,2,3,4]_x$.

An advantage of the matrix form \eqref{def:Tcx1} is that we can use operations on columns, for instance, operations $s_iTcx(g)$ and $d_iTcx(g)$ (except $d_nTcx(g)$) can be obtained by doubling and deleting corresponding columns, respectively. In particular, we write 
\begin{equation}
 d_nTcx(g)=\tau \left[\begin{array}{cccccc}
    0 & \alpha_{01} & \alpha_{02} & \ldots & \alpha_{0n} &  \\ 
    0 & \alpha_{11} & \alpha_{12} & \ldots & \alpha_{1n} &  \\ 
     &  &  &\ldots  &   &   \\ 
    0& \alpha_{r1} & \alpha_{r2} & \ldots & \alpha_{rn} &  \\ 
      &   &   & \ldots &   &   \\ 
    0 & \alpha_{n1} & \alpha_{n2} & \ldots & \alpha_{nn} &  \\ 
  \end{array}\right]^{-1}_x
  \tau\left[\begin{array}{cccccc}
    0 & \alpha_{01} & \alpha_{02} & \ldots & \alpha_{0(n-1)} & 1 \\ 
    0 & \alpha_{11} & \alpha_{12} & \ldots & \alpha_{1(n-1)} & 2 \\ 
     &  &  &\ldots  &   &   \\ 
    0& \alpha_{r1} & \alpha_{r2} & \ldots & \alpha_{r(n-1)} & r+1 \\ 
      &   &   & \ldots &   &   \\ 
    0 & \alpha_{n1} & \alpha_{n2} & \ldots & \alpha_{n(n-1)} &  n+1\\ 
  \end{array}\right]_x
  \label{def:dnTcxg}
\end{equation}
where the multiplication is understood as that of the two elements in the same row. For example,
\begin{align*}
   & d_3\tau 
\left[
\begin{array}{ccccc}
  0& 0  & 0 &0 &1 \\
  0& 0  & 0  &1 &2\\
  0& 2  & 2 & 2& 3\\
  0 & 2& 3&3 &4
\end{array}
\right]_x
=\tau\left[
\begin{array}{cccc}
  0& 0  & 0 &0  \\
  0& 0  & 0  &1 \\
  0& 2  & 2 & 2\\
  0 & 2& 3&3 
\end{array}
\right]_x^{-1}\tau\left[
\begin{array}{cccc}
  0& 0  & 0 &1  \\
  0& 0  & 0  &2 \\
  0& 2  & 2 & 3\\
  0 & 2& 3&4 
\end{array}
\right]_x\\
&=(\tau[0000]_x)^{-1}\tau[0001]_x(\tau[0001]_x)^{-1}\tau[0002]_x(\tau[0222]_x)^{-1}\tau[0223]_x(\tau[0233]_x)^{-1}\tau[0234]_x\\
&=\tau[0002]_x\tau[0223]_x\tau[0234]_x
\end{align*}
after ignoring terms of the form $\tau(s_ny)$, $y\in X_n$.
\begin{lem}\label{lem:degeneracy}
Given $x\in X_{n+1}$ and $g\in S_{n}$, the element $Tcx(g)\in GX$ is degenerate if $x$ is. 
\end{lem}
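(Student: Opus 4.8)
The plan is to show that if $x \in X_{n+1}$ is degenerate, then at least one of the factors in the product $Tcx(g)=\prod_{r=0}^{n}\tau[0,\alpha_{r1},\ldots,\alpha_{rn},r+1]_x$ is of the form $\tau(s_k y)$ for some $y$, hence equals $1$; but that is not quite what degeneracy of $Tcx(g)$ as an element of $GX$ means (a product of non-degenerate generators can itself be degenerate only via the simplicial structure of $GX$). So instead I would argue directly that every row simplex $[0,\alpha_{r1},\ldots,\alpha_{rn},r+1]_x$ is a face of $x$ along an increasing map $\{0,\ldots,n+1\}\to\{0,\ldots,n+1\}$, and that \emph{if} $x=s_m w$ for some $w\in X_n$ and some $m$, \emph{then} each such row, being $X(f_r)(x)=X(f_r)X(\sigma_m)(w)$ for the appropriate degeneracy $\sigma_m$, factors through a degeneracy, i.e. each $\tau[0,\alpha_{r1},\ldots,\alpha_{rn},r+1]_x$ lies in the image of a degeneracy map of $GX$. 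One then needs that a product of such simultaneously-degenerate simplices is again degenerate in $GX$.

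Concretely, first I would fix $x=s_m w$ with $0\le m\le n$, so that the composite $[0,\ldots,n+1]_x = X(\sigma_m)(w)$ where $\sigma_m\co[n+1]\to[n]$ is the surjection repeating the value $m$. For the $r$-th row the relevant face map is $f_r\co[n+1]\to[n+1]$ with $f_r(0)=0$, $f_r(j)=\alpha_{rj}$ for $1\le j\le n$, $f_r(n+1)=r+1$. The simplex in that row is $X(f_r)(x)=X(\sigma_m f_r)(w)$. The key combinatorial observation is that $\sigma_m f_r\co[n+1]\to[n]$ is still non-surjective — it misses a value — precisely because $f_r$ takes each of its $n+2$ inputs into an interval of integers while $\sigma_m$ collapses one more pair; I would verify that $\sigma_m\circ f_r$ factors as $\delta' \circ \sigma_{k(r)}$ for a unique surjection $\sigma_{k(r)}\co[n+1]\to[n]$ and an injection-or-identity $\delta'$, using the standard epi-mono factorization in $\Delta$. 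This exhibits $\tau[0,\alpha_{r1},\ldots,\alpha_{rn},r+1]_x = s_{k(r)-1}\big(\tau(\cdots)\big)$ as a degenerate simplex of $GX$ (note the index shift coming from the degree $-1$ of $\tau$, exactly as in the displayed formulas like $\tau s_n x = 1_n$).

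The step I expect to be the real obstacle is \emph{coherence of the degeneracies across rows}: knowing each factor individually lies in some $s_{k(r)}$-image does not immediately give that the product lies in a single $s_k$-image, which is what "degenerate in the simplicial group $GX$" requires. I would handle this by noting that $GX$ is a \emph{group} and the degenerate simplices in a given degree form subgroups $\mathrm{im}(s_k)$; but a product of elements from \emph{different} $\mathrm{im}(s_k)$'s need not be degenerate. The way out is to observe that the particular index $k(r)$ produced by the factorization above is in fact \emph{independent of $r$}: the column in the matrix \eqref{def:Tcx1} where a repeat occurs is governed by where the surjection $\sigma_m$ acts relative to the pattern of $\alpha_{rj}$'s, and since $\alpha_{rj}=\max(\{0,\ldots,r\}\cap\{0,g_1,\ldots,g_j\})$ is \emph{monotone and "saturating" in $j$ uniformly in $r$}, the first column index $j$ at which $\alpha_{r,j}=\alpha_{r,j-1}$ for \emph{every} $r$ is the same across rows — this is the location inherited from the repeat in $x$ itself. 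Granting that, every row is $s_{k-1}$ of something, $\mathrm{im}(s_{k-1})$ is a subgroup, the product stays inside it, and $Tcx(g)\in\mathrm{im}(s_{k-1})$ is degenerate. I would organize the write-up as: (1) epi-mono factorization lemma applied rowwise; (2) the uniform-column claim, proved by a short monotonicity argument on the $\alpha_{rj}$; (3) conclude using that degenerate simplices in one fixed degeneracy direction form a subgroup of $G_{n-1}X$.
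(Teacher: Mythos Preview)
Your approach is essentially the paper's: show that two adjacent columns of the matrix \eqref{def:Tcx1} coincide (equivalently, every row lies in the image of a single $s_{j-1}$, and since $s_{j-1}$ is a group homomorphism the product does too). The detour through epi--mono factorization in $\Delta$ is unnecessary; the paper works directly with the matrix and your step (2), the ``uniform-column claim,'' is the entire content.

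There are two gaps. First, your justification for the uniform column (``monotone and saturating in $j$ uniformly in $r$'') is not an argument. The paper supplies the missing combinatorics: if $x=s_ky$ with $1\le k\le n-1$, take $i,i'$ with $g_i=k$, $g_{i'}=k+1$, and set $j_0=\max\{i,i'\}$. Then $\{0,g_1,\dots,g_{j_0}\}$ and $\{0,g_1,\dots,g_{j_0-1}\}$ differ only by one of $k,k+1$, and after the identification $k\leftrightarrow k{+}1$ (which is exactly what writing $x$ in terms of $y$ does to the entries), the two sets agree, so $\alpha_{r,j_0}=\alpha_{r,j_0-1}$ for \emph{every} $r$. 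The case $k=0$ is the analogous one-step variant using the position of $g_i=1$.

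Second, the case $k=n$ genuinely breaks your uniformity claim as stated. There the epi--mono factorization of $\sigma_n\circ f_r$ yields $s_n$ on the last row ($r=n$, since $\alpha_{nn}=n$ and the final entry is $n{+}1$), but yields $s_{i-1}$ on the rows $r<n$ (where $g_i=n$). These are different degeneracy directions. The rescue is that $\tau(s_n\,\cdot)=1_n$, so the last row is the identity, which is degenerate in \emph{every} direction, in particular in direction $i{-}1$; the product then lies in $\mathrm{im}(s_{i-1})$ after all. The paper phrases this as ``delete the last row'' and then checks the remaining rows share the column repeat. You should flag and handle this case explicitly.
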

\begin{proof}
Suppose that $x=s_ky$, namely $x=[0,1,\ldots, k-1, k,k,,k+1,\ldots, n]_y$. By definition, the matrix $Tc(s_ky)(g)$ is obtained from $Tcx(g)$, by setting $k=k+1$ in all entries whose values are $k,k+1$. The degeneracy of $Tcx(g)$ is equivalent to a coincidence of two adjacent columns.   

When $k\in\{1,\ldots,n-1\}$,  we have $g_i=k$ and $g_{i'}=k+1$, for some $i,i'\in\{1,\ldots, n\}$. Let $j_0=\max\{i,i'\}\geq 2$. Now
$\{0,g_1,\ldots,g_{j_0}\}=\{0,g_1,\ldots,g_{j_0-1}\}$
after setting $k=k+1$, hence the two columns $(\alpha_{rj_0})_{r=0}^n,(\alpha_{r(j_0-1)})_{r=0}^n$ coincide, by \eqref{def:aij}. 

When $k=0$, observe that $\{0,g_1,\ldots,g_i\}=\{0,g_1,\ldots, g_{i-1}\}$ where $g_i=1$. After setting $1=0$, the columns $(\alpha_{r(i-1)})_{r=0}^n, (\alpha_{ri})_{r=0}^n$ coincide (when $g_1=1$, it is easy to check that the second column coincides with the first one, since it contains only $0$ in all entries, after setting $1=0$).

Notice that we always have $\alpha_{nn}=n$ in the last row, by definition,  whose last entry is $n+1$. When $k=n$, after setting $n=n+1$ we delete the last row without changing $Tcx(g)$, by the relation $\tau(s_ny)=1$ for $y\in X_n$. Suppose $g_i=n$ for some $i\in\{1,\ldots,n\}$. We see that $\{0,g_1,\ldots,g_i\}$ and $\{0,g_1,\ldots, g_{i-1}\}$ coincide after intersecting with $\{0,1,\ldots, r\}$ whenever $r<n$, which means that 
the $i$-th and $i+1$-th columns coincide, after we delete the last row. 
\end{proof}
On the other hand, if $x$ is non-degenerate, so does $Tcx(g)$ with $g=(1,\ldots,n)$, since its last row gives $\tau x$.

\begin{lem} We have  
\begin{equation}
d_iTcx(gp_i)=d_iTcx(g), \quad i=1,\ldots, n-1, \label{eq:faces0}
\end{equation} 
where $g=(g_i)_{i=1}^n\in S_n$ is any given element and $p_i\in S_n$ is the generator such that
\[gp_i=(g_1, g_2, \ldots g_i, g_{i+1}, \ldots ,g_n)\circ p_i=(g_1, g_2,\ldots, g_{i+1},g_i, \ldots, g_n).\]
\end{lem}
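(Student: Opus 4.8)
The plan is to compare, factor by factor, the two products \eqref{def:Tcxg} defining $Tcx(g)$ and $Tcx(gp_i)$ after applying $d_i$, using the fact that for $i$ in the stated range the operator $d_i$ acts on the matrix \eqref{def:Tcx1} simply by deleting its $i$-th column. So I would write $g'=gp_i=(g_1,\ldots,g_{i-1},g_{i+1},g_i,g_{i+2},\ldots,g_n)$ and let $\alpha_{rj}$, $\alpha'_{rj}$ be the entries produced by \eqref{def:aij} from $g$ and $g'$ respectively.

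First I would record the effect of $d_i$ on a single factor. Since $GX$ is a simplicial group, each $d_i\colon G_nX\to G_{n-1}X$ is a group homomorphism, and since $\tau$ is a twisting function we have $d_i\tau y=\tau d_iy$ for every $y\in X_{n+1}$ whenever $i\le n-1$ (this is the ``except $d_n$'' case of the face identities for $\tau$ established around Theorem \ref{thm:main3}). Applying this to each factor of \eqref{def:Tcxg}, together with $d_i[0,\alpha_{r1},\ldots,\alpha_{rn},r+1]_x=[0,\alpha_{r1},\ldots,\widehat{\alpha_{ri}},\ldots,\alpha_{rn},r+1]_x$, shows that $d_iTcx(g)$ is the product read off the matrix \eqref{def:Tcx1} of $Tcx(g)$ after deleting the column $(\alpha_{ri})_{r=0}^n$, and likewise for $g'$.

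It then remains to check that the matrices of $Tcx(g)$ and $Tcx(g')$ agree in every column but the $i$-th, i.e.\ that $\alpha'_{rj}=\alpha_{rj}$ for all $r$ and all $j\ne i$. By \eqref{def:aij} this reduces to the set identity $\{0,g_1,\ldots,g_j\}=\{0,g'_1,\ldots,g'_j\}$ for $j\ne i$: for $j<i$ the two sequences agree entry by entry, and for $j\ge i+1$ both sets equal $\{0,g_1,\ldots,g_j\}$, since swapping the two values $g_i$ and $g_{i+1}$ does not change the underlying set once both indices are included. Deleting the $i$-th column from two matrices that coincide away from it yields the same matrix, hence the same element of $G_{n-1}X$, which is \eqref{eq:faces0}. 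I expect the only point requiring care to be the index bookkeeping in the middle step --- making sure that restricting to $1\le i\le n-1$ keeps $d_i$ away from the exceptional $d_n$ (past which $\tau$ does not commute) and that the bracket entry deleted by $d_i$ is precisely the column $(\alpha_{ri})_r$ rather than one of the flanking columns $0,\ldots,0$ or $1,2,\ldots,n+1$; once the conventions behind \eqref{def:Tcx1} are fixed this is routine.
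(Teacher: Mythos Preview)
Your argument is correct and follows exactly the paper's approach: both observe that the matrices \eqref{def:Tcx1} for $g$ and $gp_i$ differ only in the column $(\alpha_{ri})_{r=0}^n$, which is precisely the one removed by $d_i$. The paper's proof is the single sentence stating this; your proposal simply unpacks the verification (that $\{0,g_1,\ldots,g_j\}=\{0,g'_1,\ldots,g'_j\}$ for $j\ne i$ and that $d_i$ commutes with $\tau$ in this range), so there is nothing to add beyond noting that the paper numbers the matrix columns from~$1$ starting with the leftmost column of zeros, whence it calls the deleted column the $(i+1)$-th rather than the $i$-th.
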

\begin{proof}
By definition, the only difference between $Tcx(g)$ and $Tcx(gp_i)$ will be in the $i+1$-th column of the matrix \eqref{def:Tcx1}, which will be removed by $d_i$.
\end{proof}
It is easily checked that $p_i^2=\mathrm{id}_{S_n}$, and $\mathrm{sign}(g)$ and $\mathrm{sign}(gp_i)$ are different. The lemma above shows that $d_iTcx=\sum_{g\in S_n}(-1)^{\mathrm{sign}(g)}d_iTc(g)=0$ for $i=1,\ldots,n-1$, since the summands cancel each other in pairs. We have already proved the following.
\begin{cor}\label{cor:d0dn} In the simplicial $R$-module $RGX$ we have $dTcx= d_0Tcx+ (-1)^{n}d_{n}Tcx$, where $d=\sum_i (-1)^id_i$, $x\in X_{n+1}$.
\end{cor}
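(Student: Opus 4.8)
The plan is to read the identity off directly from the preceding Lemma by a sign-cancellation argument, so almost no new work is needed. First I would record that $Tcx=\sum_{g\in S_n}(-1)^{\mathrm{sign}(g)}Tcx(g)$ is homogeneous of degree $n$ in $RGX$ (each factor $\tau[0,\alpha_{r1},\ldots,\alpha_{rn},r+1]_x$ lies in $G_nX$), so the simplicial differential on it is $d=\sum_{i=0}^{n}(-1)^i d_i$. Hence it suffices to show that the intermediate faces vanish, namely $d_iTcx=0$ for $1\le i\le n-1$.

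For a fixed such $i$ I would bring in the transposition $p_i\in S_n$ swapping the $i$-th and $(i+1)$-th entries of the one-line notation, exactly as in the Lemma. The three elementary facts I need are that $p_i^2=\mathrm{id}_{S_n}$, that $g\mapsto gp_i$ is a fixed-point-free involution of $S_n$, and that $\mathrm{sign}(g)$ and $\mathrm{sign}(gp_i)$ have opposite parity; all three are immediate. Pairing $g$ with $gp_i$ in
\[
 d_iTcx=\sum_{g\in S_n}(-1)^{\mathrm{sign}(g)}\,d_iTcx(g)
\]
and invoking the Lemma's equality $d_iTcx(gp_i)=d_iTcx(g)$, the two contributions are negatives of each other and cancel; summing over the $n!/2$ pairs gives $d_iTcx=0$. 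Substituting back into $d=\sum_i(-1)^id_i$ leaves precisely $dTcx=d_0Tcx+(-1)^{n}d_nTcx$. For $n=1$ there are no intermediate faces and the formula is just the definition of $d$ on $RG_1X$, so nothing is lost; this covers all $n\ge 1$.

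I do not anticipate a real obstacle. The one genuinely substantive ingredient is the Lemma itself, that is, the description of the faces $d_i$ with $i<n$ as column-deletions on the matrix form \eqref{def:Tcx1} of $Tcx(g)$, and that has already been proved; everything that remains is routine bookkeeping of signs and of the structure of $S_n$. If anything requires a second look it is checking that the pairing $g\leftrightarrow gp_i$ is consistent with the sign convention on $\mathrm{sign}(g)$ used throughout, but this is the same convention under which the Lemma is stated, so no difficulty arises.
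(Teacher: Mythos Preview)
Your proposal is correct and matches the paper's own argument essentially verbatim: the paper likewise observes that $p_i^2=\mathrm{id}_{S_n}$ and that $\mathrm{sign}(g)$ and $\mathrm{sign}(gp_i)$ differ, then uses the Lemma to cancel the summands of $d_iTcx$ in pairs for $1\le i\le n-1$, leaving only the $d_0$ and $d_n$ terms. There is nothing to add.
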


Let $y=[0,\ldots, l+1]_y=[y_0,y_1\ldots, y_{l+1}]_x$ be a (possibly degenerate) face of $x$ of dimension $l+1$, $y_0\leq y_1\leq\ldots\leq y_{l+1}$, and let $g\in S_{l}$. By definition
\[
Tcy(g)=\prod_{r=0}^{l}\tau([0,\alpha_{r1},\alpha_{r2},\ldots, \alpha_{rl},r+1]_y)=\prod_{r=0}^{l}\tau([y_0,y_{\alpha_{r1}},y_{\alpha_{r2}},\ldots, y_{\alpha_{rl}},y_{r+1}]_x),
\]
with $\alpha_{rj}=\max(\{0,1\ldots, r\}\cap\{0,g_1,\ldots, g_j\})$.
In particular, when $y=[0,1\ldots,k]_x$, $g'=(g'_i)_{i=1}^{k-1}$, we have 
\begin{equation}
 Tcy(g')= \tau\left[\begin{array}{cccccc}
    0 & \alpha_{01}'& \alpha_{02}' & \ldots & \alpha_{0(k-1)}' & 1 \\ 
    0& \alpha_{11}' & \alpha_{12}' & \ldots & \alpha_{1(k-1)}' &  2 \\ 
      &   &   & \ldots &   &   \\ 
    0 & \alpha_{(k-1)1}' & \alpha_{(n-k)2}' & \ldots & \alpha_{(k-1)(k-1)}' & k \\ 
  \end{array}\right]_x\label{def:Tcyg'}
\end{equation}
with $\alpha_{rj}'=\max(\{0,1\ldots, r\}\cap\{0,g_1',\ldots, g_j'\})$; when $y=[k,k+1,\ldots, n+1]_x$, $g''=(g_i'')_{i=1}^{n-k}\in S_{n-k}$, we have 
\begin{equation}
 Tcy(g)=\tau \left[\begin{array}{cccccc}
    k & k+\alpha_{01}'' & k+\alpha_{02}'' & \ldots & k+\alpha_{0(n-k)}'' & k+1 \\ 
    k& k+\alpha_{11}'' & k+\alpha_{12}'' & \ldots & k+\alpha_{1(n-k)}'' & k+2 \\ 
      &   &   & \ldots &   &   \\ 
    k & k+\alpha_{(n-k)1}'' & k+\alpha_{(n-k)2}'' & \ldots & k+\alpha_{(n-k)(n-k)}'' & n+1 \\ 
  \end{array}\right]_x\label{def:Tcyg''}
\end{equation}
where $\alpha_{rj}''=\max(\{0,1\ldots, r\}\cap\{0,g_1'',\ldots, g_j''\})$.

Let $g'\in S_{k-1}$ and $g''\in S_{n-k}$ be two elements, $w=\prod_{i=1}^{n-1}x_{n-i}\in\mathrm{Shuf}(a^{k-1},b^{n-k})$ a word. Using the notation \eqref{def:w1}, we identify the multiplication
\[ \left(s_{x_b^-(w)}Tc([0,k]_x)(g') \right) \left(s_{x_a^-(w)}Tc([k,n+1]_x)(g'')\right)\]
with 
 \begin{equation}
  \begin{array}{ccccccccc}
   & x_1 & x_2 & \ldots &x_i & x_{i+1} &\ldots & x_{n-1} &\\
  \hline
  &  0 & \beta_{02}' &\ldots &\beta_{0i}' & \beta_{0(i+1)}' &\ldots &\beta_{0(n-1)}'  & \beta_{0n}' \\ 
  & 0  &\beta_{12}'&\ldots &\beta_{1i}' & \beta_{1(i+1)}' &\ldots &\beta_{1(n-1)}' &  \beta_{1n}' \\ 
I)  &      &\ldots &  &  &\ldots & & &  \\
   &0 &\beta_{(k-1)2}'&\ldots & \beta_{(k-1)i}' & \beta_{(k-1)(i+1)}' &\ldots &\beta_{(k-1)(n-1)}' & \beta_{(k-1)n}' \\ 
 \hline
   & k &\beta''_{k2} &\ldots&\beta''_{ki} & \beta''_{k(i+1)} &\ldots &\beta''_{k(n-1)} & \beta''_{kn} \\ 
II) &        &  \ldots &  &  &\ldots & & &\\
  & k &\beta''_{n2} &\ldots& \beta''_{ni} & \beta''_{n(i+1)} &\ldots & \beta''_{n(n-1)} &  \beta''_{nn} \\ 
  \end{array}\label{m:Shuf1}
\end{equation}
whose columns are generated inductively by the rules: A) the columns under $x_1$ in parts I), II), respectively, coincide with the first columns of the matrices \eqref{def:Tcyg'} and \eqref{def:Tcyg''}; B) suppose that the columns under $x_i$ in parts I), II), respectively, coincides with the $n_i',n_{i}''$-th columns of matrices \eqref{def:Tcyg'} and \eqref{def:Tcyg''}, when $x_i=a$ we have the next column under $x_{i+1}$ in part I) (resp. in part II)) coincides with the $(n_i'+1)$-th column of \eqref{def:Tcyg'} (resp. remains the same), similarly when $x_i=b$, the next column in part I) remains the same while the one in part II) coincides with the $(n_i''+1)$-th one in  \eqref{def:Tcyg''}. 

\begin{prop}\label{prop:d_0}
In $RGX$ we have 
\begin{equation}
\sum_{g\in S_{k,n}}(-1)^{\mathrm{sign}(g)}d_0Tcx(g)= (-1)^{k-1}Tc([0,k]_x)\cdot Tc([k,n+1]_x),\label{eq:d0}
\end{equation}
here $S_{k,n}=\{g_1=k\mid g=(g_i)_{i=1}^n\in S_n\}$, $k=1,\ldots, n$. Consequently as $k$ runs through $1$ to $n$,
\[
d_0Tcx=\sum_{k=1}^n (-1)^{k-1}Tc([0,k]_x)\cdot Tc([k,n+1]_x).
\]
\end{prop}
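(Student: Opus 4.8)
The strategy is to evaluate $d_0Tcx(g)$ directly on the $\tau$-generators, to group the elements of $S_{k,n}$ so that each $d_0Tcx(g)$ splits into a ``front half'' and a ``back half'', and then to recognize these halves as the two factors of a single summand of $Tc([0,k]_x)\cdot Tc([k,n+1]_x)$ produced by the shuffle map \eqref{def:nabla}.

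First I would apply $d_0$ to each factor of $Tcx(g)=\prod_{r=0}^{n}\tau[0,\alpha_{r1},\ldots,\alpha_{rn},r+1]_x\in G_nX$. Since $d_0$ is a group homomorphism and the twisted formula of a twisting function alters only the face $d_n$ of an $(n+1)$-simplex, not $d_0$, the relation $d_0\tau y=\tau d_0y$ verified in Section~\ref{sec:3} applies to every factor, giving $d_0Tcx(g)=\prod_{r=0}^{n}\tau[\alpha_{r1},\ldots,\alpha_{rn},r+1]_x$; in matrix form this is \eqref{def:Tcx1} with its leading all-zero column deleted. Now fix $k$ and assume $g_1=k$. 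By \eqref{def:aij} one has $\alpha_{r1}=0$ for $r\le k-1$ and $\alpha_{r1}=k$ for $r\ge k$, so $d_0Tcx(g)$ is the product, in $G_{n-1}X$, of its first $k$ rows (those with $r=0,\ldots,k-1$, each with leading entry $0$) and its last $n+1-k$ rows (those with $r=k,\ldots,n$, each with leading entry $k$).

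The combinatorial core is the following identification, organized by the classical bijection $S_{k,n}\cong S_{k-1}\times S_{n-k}\times\mathrm{Shuf}(a^{k-1},b^{n-k})$ sending $g$ to a triple $(g',g'',w)$, in which $w$ records, along $g_2,\ldots,g_n$, the letter $a$ at the positions where $g_i<k$ and the letter $b$ at the positions where $g_i>k$; $g'\in S_{k-1}$ is the subsequence of the values $<k$; and $g''\in S_{n-k}$ is the subsequence of the values $>k$ shifted down by $k$. Put $p_j=\#\{\,2\le i\le j:g_i<k\,\}$ and $q_j=j-1-p_j$. Because for $r<k$ no value $\ge k$ among $g_2,\ldots,g_j$ meets $\{0,\ldots,r\}$, formula \eqref{def:aij} gives $\alpha_{rj}=\max(\{0,\ldots,r\}\cap\{0,g'_1,\ldots,g'_{p_j}\})$ whenever $r\le k-1$; and because for $r\ge k$ the value $k$ always lies in that intersection while the remaining relevant entries form the length-$q_j$ prefix of $g''$, one gets $\alpha_{rj}=k+\max(\{0,\ldots,r-k\}\cap\{0,g''_1,\ldots,g''_{q_j}\})$ whenever $r\ge k$. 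Read column by column, these formulas say that a column is repeated in the front half exactly at the positions $j$ with $g_{j+1}>k$ and in the back half exactly at those with $g_{j+1}<k$, which is precisely the pattern of the iterated degeneracies $s_{x_b^-(w)}$ applied to the matrix of $Tc([0,k]_x)(g')$ and $s_{x_a^-(w)}$ applied to that of $Tc([k,n+1]_x)(g'')$ in the sense of \eqref{def:w1}. Together with the boundary columns (the fixed leading column and the trailing entries $r+1$) this identifies the first $k$ rows of $d_0Tcx(g)$ with $s_{x_b^-(w)}Tc([0,k]_x)(g')$ and the last $n+1-k$ rows with $s_{x_a^-(w)}Tc([k,n+1]_x)(g'')$, that is, with the matrix \eqref{m:Shuf1}. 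Hence $d_0Tcx(g)$ is exactly the $(g',g'',w)$-summand of the expansion of $Tc([0,k]_x)\cdot Tc([k,n+1]_x)$ through \eqref{def:cdot} and \eqref{def:nabla}.

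It remains to compare signs. Counting inversions of $g=(k,g_2,\ldots,g_n)$: the leading entry $k$ forms $k-1$ inversions with the smaller values after it; the inversions internal to the subsequence of values $<k$ contribute $\mathrm{sign}(g')$; those internal to the subsequence of values $>k$ contribute $\mathrm{sign}(g'')$; and an inversion formed by a value $>k$ preceding a value $<k$ contributes to $\mathrm{sign}(w)$ --- with the conventions fixed for $\mathrm{sign}(w)$ and for the indexing of $w$ in \eqref{m:Shuf1}, these last inversions are exactly the transpositions bringing $w$ to $b^{n-k}a^{k-1}$. Hence $(-1)^{\mathrm{sign}(g)}=(-1)^{k-1}(-1)^{\mathrm{sign}(g')+\mathrm{sign}(g'')+\mathrm{sign}(w)}$, so summing over $g\in S_{k,n}$ and using the bijection reproduces $(-1)^{k-1}$ times the full shuffle expansion of $Tc([0,k]_x)\cdot Tc([k,n+1]_x)$, which is \eqref{eq:d0}. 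Finally, since $S_n=\bigsqcup_{k=1}^{n}S_{k,n}$ and $d_0$ is additive, $d_0Tcx=\sum_{g\in S_n}(-1)^{\mathrm{sign}(g)}d_0Tcx(g)=\sum_{k=1}^{n}(-1)^{k-1}Tc([0,k]_x)\cdot Tc([k,n+1]_x)$, proving the last assertion. I expect the main obstacle to be the bookkeeping in the third paragraph --- matching on the nose the repeated-column positions dictated by $w$ with the iterated degeneracy operators of the shuffle map, in particular at the leading and trailing boundary columns --- while the sign computation, although routine, is the other place where an error is easy to make.
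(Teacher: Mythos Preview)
Your proposal is correct and follows essentially the same route as the paper: both set up the bijection $S_{k,n}\cong S_{k-1}\times S_{n-k}\times\mathrm{Shuf}(a^{k-1},b^{n-k})$ (you describe its inverse, the paper its forward direction $\phi_k$), verify the same sign identity $\mathrm{sign}(g)=k-1+\mathrm{sign}(g')+\mathrm{sign}(g'')+\mathrm{sign}(w)$, and then match $d_0Tcx(g)$ with the $(g',g'',w)$-summand of the shuffle expansion by comparing the matrix \eqref{m:Shuf1} column by column. The only cosmetic difference is that the paper checks the column identity inductively (Rule~B under \eqref{m:Shuf1}) while you give the closed formula $\alpha_{rj}=\max(\{0,\ldots,r\}\cap\{0,g'_1,\ldots,g'_{p_j}\})$ for $r\le k-1$ and $\alpha_{rj}=k+\max(\{0,\ldots,r-k\}\cap\{0,g''_1,\ldots,g''_{q_j}\})$ for $r\ge k$ directly; these are equivalent presentations of the same verification.
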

\begin{proof}
Let
\[
    \phi_k: \mathrm{Shuf}(a^{k-1},b^{n-k})\times S_{k-1}\times S_{n-k}\to S_{k,n}
\]
be the map sending $(w,g',g'')$, $w=x_{n-1}\ldots x_1$, $g'=(g'_i)_{i=1}^{k-1}$ and $g''=(g''_i)_{i=1}^{n-k}$, to $g=(g_i)_{i=1}^n$ with $g_1=k$, and 
\begin{equation*}
g_{1+a_i}=g'_i, \quad g_{1+b_j}=g''_{j}+k 
\end{equation*}
 corresponding to 
\begin{equation*}
x_{a}(w)=x_{a_{k-1}}\ldots x_{a_1}, \ i=1,\ldots, k-1, \quad 
x_{b}(w)=x_{b_{n-k}}\ldots x_{b_1}, \ j=1,\ldots, n-k, 
\end{equation*}
respectively. In other words, we have the correspondence 
\begin{equation}
         k, \ \stackrel{x_1}{g_2}, \ \stackrel{x_2}{g_3}, \ \ldots, \stackrel{x_i}{g_{i+1}}, \ \ldots, \ \stackrel{x_{n-1}}{g_n}, \label{def:phi_k2}
\end{equation}
where for those entries where $x_i=a$ (resp. $x_i=b$) we fill $g_1',\ldots, g_{k-1}'$ in order (resp. $g''_1+k,\ldots, g''_{n-k}+k$ in order). To get the sign of $g$, we use permutations to change $g',g''$ into $(1,\ldots,k-1),(1,\ldots,n-k)$, respectively, and then put them in correct positions according to the permutation by which $w\to b^{n-k}a^{k-1}$, and finally we put the beginning $k$ to its correct place:
\begin{equation}
\mathrm{sign}(g)=k-1+\mathrm{sign}(w)+\mathrm{sign}(g')+\mathrm{sign}(g'').\label{eq:signg}
\end{equation}  
Clearly $\phi_k$ is injective, implying that it is an isomorphism since we have  $(n-1)!$ elements on both sides.

By definition (see \eqref{def:cdot})
\begin{equation}
Tc[0,k]_x\cdot Tc[k,n+1]_x=\sum_{g'\in S_{k-1},g''\in S_{n-k}, \atop 
w\in \mathrm{Shuf}(a^{k-1},b^{n-k})} (-1)^{\mathrm{sign}(w,g',g'')} \left(s_{x_b^-(w)}Tc[0,k]_x(g')\right) \left(s_{x_a^-(w)}Tc[k,n+1]_x(g'')\right),
\label{eq:d02}
\end{equation}
$\mathrm{sign}(w,g',g'')=\mathrm{sign}(w)+\mathrm{sign}(g')+\mathrm{sign}(g'')$. Together with \eqref{eq:signg}, it suffices to show 
\begin{equation}
\left(s_{x_b^-(w)}Tc[0,k]_x(g')\right) \left(s_{x_a^-(w)}Tc[k,n-k]_x(g'')\right)=d_0Tcx(g),\quad g=\phi_k(w,g',g''), \label{eq:d03}
\end{equation}
$w=\prod_{i=1}^{n-1}x_{n-i}$, which will be done by a comparison between \eqref{m:Shuf1} and the matrix $d_0Tcx(g)$, where the latter is obtained by deleting the first column of that of $Tcx(g)$:
\[
  \begin{array}{ccccccccc}
    & x_1& x_2 & \ldots &x_i & x_{i+1} &\ldots & x_{n-1} &\\
  \hline
    & 0 &\alpha_{02} &\ldots &\alpha_{0i} & \alpha_{0(i+1)} &\ldots &\alpha_{0n}  & 1 \\ 
     & 0&\alpha_{12}&\ldots &\alpha_{1i} & \alpha_{1(i+1)} &\ldots &\alpha_{1n} & 2 \\ 
I)       & &\ldots &  &  &\ldots & & &  \\
   & 0 &\alpha_{(k-1)2}&\ldots & \alpha_{(k-1)i} & \alpha_{(k-1)(i+1)} &\ldots &\alpha_{(k-1)n} & k \\ 
   \hline
   & k &\alpha_{k2}&\ldots&\alpha_{ki} & \alpha_{k(i+1)} &\ldots &\alpha_{kn} & k+1 \\ 
 II)      &  &  \ldots &  &  &\ldots & & &\\
    & k&\alpha_{n2}&\ldots& \alpha_{ni} & \alpha_{n(i+1)} &\ldots &\alpha_{nn} & n+1 \\ 
  \end{array}
\]
whose first column $(\alpha_{r1})_{r=0}^n$, $\alpha_{r1}=\max(\{0,\ldots, r\}\cap \{0,g_1\})$ is obtained from the assumption $g_1=k$. Suppose that the $i$-th column of the matrix above, $1\leq i\leq n-1$, already coincides with that of $\eqref{m:Shuf1}$, namely 
\[\alpha_{ri}=\begin{cases}\beta_{ri}'=\max(\{0,1,\ldots, r\}\cap \{0,g_1',\ldots, g_{n_i'-1}'\} & r\leq k-1\\
\beta_{ri}''=k+\max(\{0,1,\ldots, r-k\}\cap \{0,g_1'',\ldots, g_{n_i''-1}''\}  & r\geq k.
\end{cases}\] 
Now $\alpha_{r(i+1)}=\max(\{0,\ldots, r\}\cap \{0,g_1,\ldots, g_{i+1}\})$, by definition.
If $x_i=a$, by \eqref{def:phi_k2} we have $g_{i+1}=g'_{n_i'}<k$, $n_i'\in\{1,\ldots,k-1\}$, then
\[\alpha_{r(i+1)}=\begin{cases}\max(\{0,\ldots, r\}\cap \{0,g'_1,\ldots, g'_{n_i}\})=\beta_{r(i+1)}'  & r\leq k-1 \\ k+\max(\{0,1,\ldots, r-k\}\cap \{0,g_1'',\ldots, g_{n_i''-1}''\}=\beta_{ri}'' & r\geq k;\end{cases}\]
if $x_i=b$, we have $g_{i+1}=g''_{n''_i}+k$, $n''_i\in\{1,\ldots, n-k\}$, then
\[
\alpha_{r(i+1)}=\begin{cases}\max(\{0,\ldots, r\}\cap \{0,g'_1,\ldots, g'_{n_i-1}\})=\beta_{ri}'  & r\leq k-1 \\ k+\max(\{0,\ldots, r-k\}\cap \{0,g_1'',\ldots, g_{n_i''}''\})=\beta_{r(i+1)}'' & r\geq k.\end{cases}
\]
We see that the two matrices coincide, by Rule B) under \eqref{m:Shuf1}, hence \eqref{eq:d03} holds. Together with \eqref{eq:d02} and \eqref{eq:signg}, we get \eqref{eq:d0}. The second statement is clear.
\end{proof}

\begin{prop}\label{prop:d_n}
In $RGX$ we have \begin{equation}
(-1)^n\sum_{g\in S_{n,k}}(-1)^{\mathrm{sign}(g)}d_nTcx(g)=(-1)^{k}Tcd_kx=(-1)^{k}\sum_{g'\in S_{n-1}}(-1)^{\mathrm{sign}(g')}Tcd_kx(g'), \label{eq:d_n}
\end{equation}
 where $S_{n,k}=\{g=(g_i)_{i=1}^n\in S_n\mid g_n=k\}$. Consequently, as $k$ runs through $1$ to $n$,
\[
(-1)^n d_nTcx=\sum_{k=1}^n (-1)^kTcd_kx.
\]
\end{prop}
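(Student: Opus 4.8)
The plan is to run the argument of the proof of Proposition~\ref{prop:d_0} with the roles of the first and last letters interchanged: I partition $S_n$ according to the value $g_n=k$ of the \emph{last} letter, match the resulting block against $Tcd_kx$, and sum over $k$. Since $S_n=\bigsqcup_{k=1}^{n}S_{n,k}$ and $Tcx=\sum_{g\in S_n}(-1)^{\mathrm{sign}(g)}Tcx(g)$, and since the second equality in \eqref{eq:d_n} is merely the definition of $Tcd_kx$ for the $n$-simplex $d_kx$, it suffices to establish the first equality in \eqref{eq:d_n} for each fixed $k\in\{1,\dots,n\}$; summing then gives $(-1)^nd_nTcx=\sum_{k=1}^n(-1)^kTcd_kx$.

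First I set up the bijection $\psi_k\co S_{n-1}\to S_{n,k}$ sending $g'=(g_i')_{i=1}^{n-1}$ to $g=(h_k(g_1'),\dots,h_k(g_{n-1}'),k)$, where $h_k\co\{1,\dots,n-1\}\to\{1,\dots,n\}\setminus\{k\}$ is the order isomorphism, $h_k(v)=v$ for $v<k$ and $h_k(v)=v+1$ for $v\ge k$; this is evidently a bijection, both sides having $(n-1)!$ elements. Counting inversions of $g=\psi_k(g')$ gives
\[
  \mathrm{sign}(g)=\mathrm{sign}(g')+(n-k)\pmod 2 ,
\]
the summand $n-k$ coming from the pairs $(i,n)$ with $g_i>g_n=k$; this is the analogue of \eqref{eq:signg}.

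The heart of the matter is the identity $d_nTcx(g)=Tc(d_kx)(g')$ in $G_{n-1}X$ for $g=\psi_k(g')$, read off from \eqref{def:dnTcxg}. Write $d_nTcx(g)=\prod_{r=0}^{n}\bigl(\tau A_r^{-1}\,\tau B_r\bigr)$ (the product taken in increasing order of $r$) with $A_r=[0,\alpha_{r1},\dots,\alpha_{rn}]_x$ and $B_r=[0,\alpha_{r1},\dots,\alpha_{r(n-1)},r+1]_x$. Since $g_n=k$ one has $\{0,g_1,\dots,g_{n-1}\}=\{0,1,\dots,n\}\setminus\{k\}$, so $\alpha_{rn}=r$ and $\alpha_{r(n-1)}=\max(\{0,\dots,r\}\setminus\{k\})$; hence the last two entries of $A_r$ coincide exactly when $r\neq k$, in which case $A_r$ is the top degeneracy of an $(n-1)$-simplex and $\tau A_r=1$. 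For $j\le n-1$ the value $k=g_n$ lies in none of the sets $\{g_1,\dots,g_j\}$, so $\alpha_{kj}=\alpha_{(k-1)j}$ for all such $j$ and therefore $A_k=B_{k-1}$; consequently the one surviving inverse factor $\tau A_k^{-1}=\tau B_{k-1}^{-1}$ annihilates the final factor $\tau B_{k-1}$ of the $(k-1)$-st block, leaving
\[
  d_nTcx(g)=\prod_{r\in\{0,\dots,n\}\setminus\{k-1\}}\tau B_r .
\]
It then remains to identify these rows with those of $Tc(d_kx)(g')$: writing $d_kx=[y_0,\dots,y_n]_x$ with $y_i=i$ for $i<k$ and $y_i=i+1$ for $i\ge k$, and $\alpha_{sj}'=\max(\{0,\dots,s\}\cap\{0,g_1',\dots,g_j'\})$, one checks that $B_r$ is the row $[\,y_0,y_{\alpha_{s1}'},\dots,y_{\alpha_{s(n-1)}'},y_{s+1}\,]_x$ with $s=r$ when $r\le k-2$ and $s=r-1$ when $r\ge k$, so that the surviving product reassembles, in order, into $\prod_{s=0}^{n-1}$, that is, into $Tc(d_kx)(g')$.

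I expect this last identification to be the main obstacle. It rests on the fact that $h_k$ carries $\{0,g_1',\dots,g_j'\}$ onto the part of $\{0,g_1,\dots,g_j\}$ lying below $k$ together with the part lying above $k$ shifted down by one, which forces a case split: if $\alpha_{sj}'<k$ then the set defining $\alpha_{rj}$ contains no value exceeding $k$ and $\alpha_{rj}=\alpha_{sj}'=y_{\alpha_{sj}'}$, whereas if $\alpha_{sj}'\ge k$ then $\alpha_{rj}$ equals that value increased by one, which is again $y_{\alpha_{sj}'}$; the remaining entries match because $y_0=0$ and $y_{s+1}=r+1$ for $r$ in either range. Granting the matching and the sign formula, the bijection $\psi_k$ yields
\begin{align*}
  (-1)^n\sum_{g\in S_{n,k}}(-1)^{\mathrm{sign}(g)}d_nTcx(g)
  &=(-1)^{n}\sum_{g'\in S_{n-1}}(-1)^{\mathrm{sign}(g')+(n-k)}Tc(d_kx)(g')\\
  &=(-1)^{k}\,Tc(d_kx),
\end{align*}
which is the first equality of \eqref{eq:d_n}; summing over $k=1,\dots,n$ completes the proof.
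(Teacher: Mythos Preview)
Your argument is correct and follows essentially the same route as the paper's: both proofs set up the bijection $\psi_k$ between $S_{n,k}$ and $S_{n-1}$, compute the sign shift $n-k$, use the relation $\tau(s_n y)=1$ to kill all but one inverse factor in \eqref{def:dnTcxg}, observe that this surviving factor cancels the adjacent row (your $A_k=B_{k-1}$ is exactly the paper's ``eliminates the $k$-th row''), and then identify the remaining $n$ rows with those of $Tc(d_kx)(g')$ via the case split on whether an entry is $<k$ or $\ge k$. The only difference is cosmetic: the paper records the computation in the matrix notation \eqref{eq:dnTcxgk}--\eqref{eq:Tdckx}, whereas you track the product $\prod_r(\tau A_r)^{-1}\tau B_r$ factor by factor.
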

\begin{proof}
First we show that 
\begin{equation}
   d_nTcx(g)=   \tau \left[\begin{array}{cccccc}
    0 & \alpha_{01} & \alpha_{02} & \ldots & \alpha_{0(n-1)} & 1 \\ 
 &  &  &\ldots  &   &   \\ 
     0& \alpha_{(k-1)1} & \alpha_{(k-1)2} & \ldots & \alpha_{(k-1)(n-1)} & k-1 \\ 
    0& \alpha_{(k+1)1} & \alpha_{(k+1)2} & \ldots & \alpha_{(k+1)(n-1)} & k+1 \\ 
      &   &   & \ldots &   &   \\ 
    0 & \alpha_{n1} & \alpha_{n2} & \ldots & \alpha_{n(n-1)} & n+1 \\ 
  \end{array}\right]_x \label{eq:dnTcxgk}
\end{equation}
which is obtained from $Tcx(g)$ by deleting the $k$-th row and the $(n+1)$-th column. To see this, for $g=(g_i)_{i=1}^n$ with $g_n=k$, we have 
\[
 \alpha_{r(n-1)}=\max(\{0,\ldots,r\}\cap \{0,g_1,\ldots,g_{n-1}\})=
 \max(\{0,\ldots,r\}\cap \{0,g_1,\ldots,g_{n-1},k\})=\alpha_{rn}
\]
unless $r=k$, where $\alpha_{k(n-1)}=k-1$ and $\alpha_{kn}=k$, respectively. Using \eqref{def:dnTcxg} and the rule $\tau s_nz=1$ for $z\in X_n$, we see that the non-trivial part of the block on the left-hand side of \eqref{def:dnTcxg} concentrates in row $k+1$, which eliminates the $k$-th row in the block on the right-hand side (since the entries in these two rows are identical, they cancel each other after multiplication), whose remaining rows give \eqref{eq:dnTcxgk}. 
  
Next we define a map $\psi_k:S_{n,k}\to S_{n-1}$ by sending $g=(g_i)_{i=1}^n$ to $g'=(g'_i)$, where $g'_i=g_i$ if $g_i<k$ otherwise $g_i'=g_{i}-1$ if $g_i>k$, $1\leq i\leq n-1$. We claim that \eqref{eq:dnTcxgk} coincides with 
\begin{equation}
    Tcd_kx(\psi_k(g))=\tau 
 \left[\begin{array}{cccccc}
    0 & \alpha_{01}' & \alpha_{02}' & \ldots & \alpha_{0(n-1)}' & 1 \\ 
 &  &  &\ldots  &   &   \\ 
     0& \alpha_{(k-1)1}' & \alpha_{(k-1)2}' & \ldots & \alpha_{(k-1)(n-1)}' & k-1 \\ 
    0& \alpha_{k1}' & \alpha_{k2}' & \ldots & \alpha_{k(n-1)}' & k \\ 
      &   &   & \ldots &   &   \\ 
    0 & \alpha_{(n-1)1}' & \alpha_{(n-1)2}' & \ldots & \alpha_{(n-1)(n-1)}' & n\\ 
  \end{array}\right]_y\label{eq:Tdckx}
\end{equation}
where $y=d_kx$, namely $y=[y_0,\ldots, y_n]_x$ with $y_i=\begin{cases} i & i<k \\ i+1 & i\geq k\end{cases}$. To see this, notice that $\alpha_{rj}=\max(\{0,\ldots, r\}\cap\{0,g_1,\ldots, g_j\})\geq k+1$ if and only if both $r\geq k+1$ and $\max\{g_1,\ldots, g_j\}\geq k+1$. This implies that $\max\{g_1',\ldots, g_j'\}=\max\{g_1,\ldots, g_j\}-1$, by the construction of $\psi_k$, therefore 
\[
\alpha_{(r-1)j}'=\max(\{0,\ldots, r-1\}\cap\{0,g'_1,\ldots, g'_j\})=\alpha_{rj}-1,
\]
when $\alpha_{rj}\geq k+1$ (which implies that $r\geq k+1$). It is easy to see that $\alpha_{rj}'=\alpha_{rj}$ when $\alpha_{rj}\leq k$. We see that \eqref{eq:Tdckx} is obtained from \eqref{eq:dnTcxgk} by changing each entry $u$ to $u-1$ whenever $u\geq k+1$, i.e., the claim holds. Finally it can be easily  checked that $\psi_k$ is bijective, and 
\[\mathrm{sign}(g)=\mathrm{sign}(\psi_k(g))+n-k,
\]
which can be done by first moving $g_1,\ldots, g_{n-1}$ to the correct order by a permutation, namely the one $\psi_k(g)\to (1,\ldots,n-1)$ since $g_n=k$ is not involved, then we put the last $g_n=k$ to its correct position, giving the desired sign $(-1)^{n-k}$, thus \eqref{eq:d_n} follows. The second statement is clear.
\end{proof}

We consider $\square^n$ as a simplicial set, being the cartesian product of $n$ copies of $[0,1]$. A triangulation of $\square^n$ by $n!$ simplices is given by
\[
  \underbrace{[0,1]\times\ldots\times [0,1]}_{n}=\sum_{g\in S_n}(-1)^{\mathrm{sign}(g)}w_g([0,1],\ldots, [0,1]),
\]
where $w_g=a_{g_n}\ldots a_{g_1}$ for $g=(g_1,\ldots, g_n)\in S_n$. In other words, $w\in\mathrm{Shuf}(a_1,\ldots, a_n)$, $a_k$ the letter associated to the $k$-th copy of $[0,1]$, $k=1,\ldots,n$. Topologically $w_g([0,1],\ldots, [0,1])$ is the convex hull of ordered vertices 
\begin{equation}
v_0=(0,\ldots,0),v_1,\ldots, v_{n-1}, v_n=(1,\ldots,1), \quad v_i\in\{0,1\}^n\subset [0,1]^n,\label{def:vi}
\end{equation} 
where for $i=1,\ldots, n$, the coordinates of $v_{i-1}$ and $v_{i}$ coincide except at the $g_i$-th component, which is $0$ in $v_{i-1}$ and $1$ in $v_i$, coinciding with \eqref{def:w1} when $n=2$.

We define its $k$-th top face $d_k^0\square^n$ (resp. $k$-th bottom face $d_k^1\square^n$) as the subcomplex consisting of points whose $k$-th coordinate is $1$ (resp. whose $k$-th coordinate is $0$). The following lemma is well-known.
\begin{lem}\label{lem:wg}
The $k$-th top face $d_k^0\square^n$ of $\square^n=\bigcup_{g\in S_n}w_g([0,1],\ldots, [0,1])$, is the union $\bigcup_{g\in S_{k,n}}d_0w_{g}([0,1],\ldots,[0,1])$, $S_{k,n}=\{g\in S_n\mid g_1=k\}$, and its $k$-th bottom face $d_k^1\square^n$ is the union $\bigcup_{g\in S_{n,k}}d_nw_{g}([0,1]^n)$, $S_{n,k}=\{g\in S_n\mid g_n=k\}$. 
Moreover, as $n$-simplices in the cartesian product $[0,1]^n$, we have 
\begin{equation}
d_iw_g([0,1],\ldots,[0,1])=d_iw_{gp_i}([0,1],\ldots,[0,1]), \quad i=1,\ldots, n-1, \label{eq:faces1}
\end{equation} 
where $gp_i=(g_i')_{i=1}^n$ is obtained from $g=(g_i)_{i=1}^n$ by permuting $i, i+1$ entries, namely $g_i'=g_{i+1}$ and $g'_{i+1}=g_i$ and coincide in other entries.
\end{lem}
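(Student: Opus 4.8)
The plan is to read both facet descriptions, together with the identity \eqref{eq:faces1}, directly off the vertex chain of $w_g$. Writing $\chi_A\in\{0,1\}^n$ for the characteristic vector of $A\subseteq\{1,\ldots,n\}$, the ordered vertices $v_0,\ldots,v_n$ of $w_g([0,1],\ldots,[0,1])$ in \eqref{def:vi} are $v_j=\chi_{\{g_1,\ldots,g_j\}}$; hence $v_0\le v_1\le\cdots\le v_n$ coordinatewise, $v_j$ differs from $v_{j-1}$ only in the $g_j$-th coordinate (which passes from $0$ to $1$), and the $k$-th coordinate of $v_j$ equals $1$ precisely when $k\in\{g_1,\ldots,g_j\}$. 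All three assertions are bookkeeping with this chain.

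For the top face, if $g\in S_{k,n}$ (that is, $g_1=k$) then every vertex of $d_0w_g=[v_1,\ldots,v_n]$ has $k$-th coordinate $1$, so $d_0w_g\subseteq d_k^0\square^n$. To see these facets exhaust $d_k^0\square^n$, I would identify $d_k^0\square^n$ with the cube $[0,1]^{n-1}$ on the $n-1$ coordinates other than $k$; under this identification $v_1=\chi_{\{k\}}$ becomes the origin and the chain $v_1,\ldots,v_n$ flips the remaining coordinates in the order $g_2,\ldots,g_n$, so $d_0w_g$ is the simplex $w_{g'}$ of the standard triangulation of $[0,1]^{n-1}$, where $g'$ is the permutation of $\{1,\ldots,n\}\setminus\{k\}$ obtained by deleting the leading $k$ from $g$. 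Since $g\mapsto g'$ is a bijection $S_{k,n}\to S_{n-1}$ and the $(n-1)!$ simplices $w_{g'}$ cover $[0,1]^{n-1}$, this gives $\bigcup_{g\in S_{k,n}}d_0w_g=d_k^0\square^n$. The bottom face is symmetric: $d_nw_g=[v_0,\ldots,v_{n-1}]$ lies in $d_k^1\square^n$ iff its largest vertex $v_{n-1}=\chi_{\{g_1,\ldots,g_{n-1}\}}$ has $k$-th coordinate $0$, i.e.\ iff $g_n=k$, and the same relabelling identifies $\bigcup_{g\in S_{n,k}}d_nw_g$ with the standard triangulation of $d_k^1\square^n$.

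For \eqref{eq:faces1}, fix $i\in\{1,\ldots,n-1\}$ and compare the vertex chains of $w_g$ and $w_{gp_i}$: for $j<i$ the vertex $v_j=\chi_{\{g_1,\ldots,g_j\}}$ uses only the untouched entries $g_1,\ldots,g_{i-1}$; for $j\ge i+1$ it depends only on the set $\{g_1,\ldots,g_j\}$, which is unaffected by transposing $g_i$ with $g_{i+1}$; so the two chains differ only in $v_i$. Deleting $v_i$ via $d_i$ then produces the same simplex, which is \eqref{eq:faces1}.

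I do not expect a real obstacle. The two points needing care are making the identification $d_k^0\square^n\cong[0,1]^{n-1}$ (and its bottom-face analogue) precise, and the matching count $|S_{k,n}|=|S_{n-1}|=(n-1)!$ that forces the listed facet simplices to cover the whole facet rather than a proper subcomplex of it; granting in the first place that the $n!$ staircase simplices $w_g$ triangulate $[0,1]^n$, so that their $d_0$- and $d_n$-facets triangulate the corresponding boundary cubes, is exactly the classical fact the lemma invokes.
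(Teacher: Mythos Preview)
Your proposal is correct and follows essentially the same approach as the paper: both arguments read everything off the vertex chain $v_j=\chi_{\{g_1,\ldots,g_j\}}$, and your treatment of \eqref{eq:faces1} is identical to the paper's. The only cosmetic difference is in the coverage step for the facets: the paper phrases it as a cone argument (the simplices $w_g$ with $g_1=k$ are exactly the cone $v_0*d_k^0\square^n$, hence their $d_0$-faces triangulate $d_k^0\square^n$), whereas you identify $d_k^0\square^n$ with $[0,1]^{n-1}$ and match the staircase triangulation there via the bijection $g\mapsto(g_2,\ldots,g_n)$; both are equally direct.
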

\begin{proof}
We see that the $k$-th top face of $\square^n$, consisting of points whose $k$-th coordinate is $1$, is the union of $(n-1)!$ simplices in which any of them is a convex hull of $v_1,\ldots, v_n$ such that the $k$-th coordinate of all $v_i$ is $1$. The cone $v_0*d_k^0\square^n$ of $v_0$ with these $(n-1)!$ simplices gives the union $\cup_{g\in S_{k,n}}w_{g}([0,1]^n)$, so $d_k^0\square^n=\cup_{g\in S_{k,n}}d_0w_{g}([0,1]^n)$.  Similarly $\cup_{g\in S_{n,k}}w_{g}([0,1]^n)$ is the cone $d_k^1\square^n*v_n$ of $d_k^1\square^n$, the union of all non-degenerate simplices of dimension $n-1$ whose $k$-th coordinate is $0$, with $v_n$. We see that $d_k^1\square^n=\cup_{g\in S_{n,k}}d_nw_{g}([0,1]^n)$.    

For the second statement, the sequence \eqref{def:vi} of vertices $v_0',\ldots, v_n'$ corresponding to $w_{gp_i}([0,1],\ldots,[0,1])$ only differs in $v_i'$, whose $g_i'$-component is $1$. We see that \eqref{eq:faces1} holds by deleting the $i$-th vertices in the two sequences.
\end{proof}

\begin{proof}[Proof of Statements a)-d) in Theorem \ref{thm:main0}]
For a), we associate a cube $cx=\square^n_x$ to every $x\in X_{n+1}$ , and for every given $g\in S_{n}$, we define the image of the $n$-simplex $w_g([0,1],\ldots,[0,1])$ in Lemma \ref{lem:wg} under the map $|cx|$ as the $n$-simplex $Tcx(g)$, preserving the order of vertices of the simplices. This definition is clearly functorial with respect to simplicial maps, which is d). The $n!$ pieces match well to give a map $|cx|\co \square^n_x\to |GX|$, by a comparison of \eqref{eq:faces0} and \eqref{eq:faces1}. Moreover, it is a morphism of topological monoids, by their definitions \eqref{def:monoid1}, \eqref{def:cx},  and the well-know fact on the triangulation of a cartesian product of simplices and their shuffle product. Together with Propositions \ref{prop:d_0}, \ref{prop:d_n}, by choosing $R=\mathbb{Z}$, we get c). The statement b) follows from Lemma \ref{lem:degeneracy}.
\end{proof}
It remains to prove e). Recall that a space is abelian if its fundamental group acts trivially on all homotopy groups. In particular, a topological monoid $M$ is abelian since for two maps $f_1\co I\to M$ and $f_n\co I^n\to M$ representing two elements in $\pi_1$ and $\pi_n$, respectively, where $I=[0,1]$ is the unit interval, their cartesian product 
\[I\times I^n\stackrel{(f_1,f_n)}{\longrightarrow} M\times M\stackrel{\cdot}{\longrightarrow} M\] 
gives a homotopy between $f_n$ which is represented by $\{0\}\times I^n\to M$, and action of $f_1$ on $f_n$ which is represented by $E^{n}\to M$, where $E^n$ is the union of other faces on the boundary of the $(n+1)$-cube $I\times I^n$. 
Together with the fact that 
a map between two connected abelian $CW$ complexes inducing isomorphisms on all homology groups is a homotopy equivalence (see for example, Hatcher \cite[Page 418, Prop. 4.74]{Hat02}),  the lemma below is clear.
\begin{lem}\label{lem:Hatcher}
    A morphism $f\co M\to M'$ of connected topological monoids $M,M'$ induces a weak homotopy equivalence if and only if it induces an isomorphism $H_*(M;\mathbb{Z})\cong H_*(M';\mathbb{Z})$ of homology groups. 
\end{lem}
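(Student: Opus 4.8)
The plan is to reduce Lemma \ref{lem:Hatcher} to the result of Hatcher recalled immediately above, namely that a map of connected abelian $CW$ complexes inducing isomorphisms on all homology groups is a homotopy equivalence. The \emph{only if} direction needs no argument, since a weak homotopy equivalence induces isomorphisms on all integral homology groups. So everything is in the \emph{if} direction, and the two things to arrange are that one may replace $M,M'$ by $CW$ complexes and that the resulting spaces are still abelian.

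For the first point I would pass to $CW$ approximations $p\co \Gamma M\to M$ and $p'\co \Gamma M'\to M'$, that is, weak homotopy equivalences with $\Gamma M,\Gamma M'$ connected $CW$ complexes, and then lift $f$: since $p'$ is a weak equivalence and $\Gamma M$ is a $CW$ complex, $f\circ p$ factors up to homotopy as $p'\circ\widetilde f$ for some $\widetilde f\co \Gamma M\to\Gamma M'$. From the identity $p'_*\circ\widetilde f_*=f_*\circ p_*$ on integral homology, together with the fact that $p_*$ and $p'_*$ are isomorphisms by construction while $f_*$ is an isomorphism by hypothesis, it follows that $\widetilde f_*\co H_*(\Gamma M;\mathbb{Z})\to H_*(\Gamma M';\mathbb{Z})$ is an isomorphism.

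For the second point, I would observe that the abelian-ness of $M$ and $M'$ is precisely the computation made just before the statement (the homotopy $I\times I^n\to M\times M\to M$ showing that $\pi_1$ acts trivially on $\pi_n$); since being abelian is a condition on the homotopy groups and the $\pi_1$-action alone, it is a property of the weak homotopy type, so $\Gamma M$ and $\Gamma M'$ are abelian as well, and they are connected because $M,M'$ are. Hatcher's proposition then applies to $\widetilde f$, so $\widetilde f$ is a homotopy equivalence, hence a weak homotopy equivalence; and since $p,p'$ are weak equivalences and $p'\circ\widetilde f\simeq f\circ p$, a two-out-of-three argument in this homotopy-commuting square shows $f$ is a weak homotopy equivalence.

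I do not expect a genuine obstacle here: the substantive input, the Eckmann--Hilton--type homotopy giving abelian-ness, is already in place before the statement, and the remaining steps are formal. The only point requiring a moment's care is that ``abelian'' survives $CW$ approximation, which is immediate from its definition; moreover, in the setting where this lemma will be used, $M=|CX|$ and $M'=|GX|$ are already $CW$ complexes, so there the approximation step can be skipped and Hatcher's proposition applies to $|T|$ directly.
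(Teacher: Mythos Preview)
Your proposal is correct and follows the same route as the paper, which simply declares the lemma ``clear'' from the Eckmann--Hilton homotopy (establishing that topological monoids are abelian) together with Hatcher's Proposition~4.74. You are in fact more careful than the paper in spelling out the $CW$ approximation step and the two-out-of-three argument; the paper leaves these implicit.
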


\begin{prop}\label{prop:fil}
Let $X$ be the simplicial set obtained from the reduced simplicial set $X'$ by attaching a single non-degenerate simplex $x'$ of dimension $n+1$, $n\geq 0$.  
If $|T|\co |CX'|\to |GX'|$ is a homotopy equivalence, so does $|\widetilde{T}|\co|\widetilde{C}X|\to |GX|$, where 
\[\widetilde{C}X=\begin{cases}CX & n\geq 1\\
CX_{cx} & n=0.
\end{cases}
\] 
Here $CX_{cx}$ is the localization of the free monoid $CX$ with respect to $cx$. In other words, $CX_{cx}$ is obtained from the free monoid $CX$ by adding one more generator $y'$ of degree $0$, as well as relations $cx\cdot y'=y'\cdot cx=1$. 
\end{prop}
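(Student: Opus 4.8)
The plan is to run a cell‑by‑cell induction whose inductive step is exactly this proposition, and to reduce everything to integral homology. Since $|\widetilde{C}X|$ and $|GX|$ are connected topological monoids, Lemma~\ref{lem:Hatcher} lets me replace ``homotopy equivalence'' by ``isomorphism on $H_*(-;\mathbb{Z})$''. So it suffices to show that the differential graded algebra map $\widetilde{T}\co \mathbb{Z}\widetilde{C}X\to\mathbb{Z}GX$ is a quasi‑isomorphism, where $\mathbb{Z}\widetilde{C}X$ carries the cobar differential of Corollary~\ref{cor:Adams}, and $\mathbb{Z}GX$ carries the shuffle product \eqref{def:cdot} with the internal differential $d=\sum_i(-1)^id_i$ (so that $\widetilde{T}d=d\widetilde{T}$ is precisely the content of Propositions~\ref{prop:d_0}, \ref{prop:d_n} and Corollary~\ref{cor:d0dn}, and $\widetilde{T}$ is multiplicative by Theorem~\ref{thm:main0}(b),(d)). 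It is convenient to pass to the normalized subcomplex of $\mathbb{Z}GX$, which is quasi‑isomorphic to it and through which $\widetilde{T}$ factors. I would first handle the base case $X=S^{n+1}$ directly: here $\mathbb{Z}CS^{n+1}=T(\mathbb{Z}\langle cx'\rangle)$ with zero differential (all proper faces of $x'$ collapse), $Tcx'=\tau x'$, while $H_*(\mathbb{Z}GS^{n+1})=H_*(\Omega S^{n+1};\mathbb{Z})\cong\mathbb{Z}[\iota_n]$ by $|GS^{n+1}|\simeq\Omega S^{n+1}$ (Corollary~\ref{thm:Berger}, Theorem~\ref{thm:main5}) and the classical James/Bott--Samelson computation; since $T_{S^{n+1}}$ is a ring map sending $[cx']$ to the polynomial generator, it is a quasi‑isomorphism (for $n=0$, with the localizations in place, one gets $\mathbb{Z}[\iota_0^{\pm1}]$ on both sides).

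Next I would make precise the effect of attaching the single non‑degenerate $(n+1)$‑simplex $x'$. On the cobar side $\mathbb{Z}\widetilde{C}X$ is $\mathbb{Z}CX'$ with exactly one free generator $cx'$ of degree $n$ adjoined (and, when $n=0$, its two‑sided inverse), with $d(cx')=\omega\in\mathbb{Z}CX'$ a cycle, because every summand of $d(cx')$ in Corollary~\ref{cor:Adams} is built from proper faces of $x'$, all of which lie in $X'$. On Kan's side, Theorem~\ref{thm:main3} presents $G_mX$ as the free group on the non–top–degenerate $(m+1)$‑simplices of $X$; these split as the generators of $G_mX'$ together with the ``new'' ones $\tau(s_Jx')$, and the collapse map $q\co X\to S^{n+1}$ (where $S^{n+1}=\Delta^{n+1}/\partial\Delta^{n+1}$) carries the new generators bijectively, compatibly with degeneracies, onto the generators of $G_mS^{n+1}$. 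Thus $\mathbb{Z}GX$ is obtained from $\mathbb{Z}GX'$ by adjoining these new generators, $\widetilde{T}$ maps the sub‑DGA $\mathbb{Z}CX'$ into $\mathbb{Z}GX'$, and $\widetilde{T}cx'$ equals $\tau x'$ modulo the part generated by fewer new generators (in the generic summands $Tcx'(g)$ the non‑$\tau x'$ factors are $\tau$'s of faces of $x'$ lying in $X'$). When $n=0$ the localization $\widetilde{C}X=CX_{cx}$ is the monoid coproduct $CX'*\mathbb{Z}$, and $\tau x'$ is already invertible in $G_0X$; the argument below applies verbatim.

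The comparison is then a filtration spectral sequence. Filter $\mathbb{Z}\widetilde{C}X$ (resp.\ the normalized $\mathbb{Z}GX$) by the number of occurrences of $cx'$ (resp.\ of new generators); both filtrations are non‑negative and exhaustive, $\widetilde{T}$ preserves them, and the differentials preserve them. The associated graded objects are naturally coproducts of augmented differential graded algebras over $\mathbb{Z}$,
\[
   \mathrm{gr}\,\mathbb{Z}\widetilde{C}X\;\cong\;\mathbb{Z}CX'\amalg_{\mathbb{Z}}\mathbb{Z}CS^{n+1},\qquad \mathrm{gr}\,\mathbb{Z}GX\;\cong\;\mathbb{Z}GX'\amalg_{\mathbb{Z}}\mathbb{Z}GS^{n+1}
\]
(with respect to the tensor‑algebra product on the left and the shuffle product on the right; for $n=0$ one replaces $\mathbb{Z}CS^{n+1}$, $\mathbb{Z}GS^{n+1}$ by their localizations), and $\mathrm{gr}\,\widetilde{T}=T_{X'}\amalg_{\mathbb{Z}}T_{S^{n+1}}$. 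Because $\mathbb{Z}CX'$, $\mathbb{Z}GX'$, $\mathbb{Z}CS^{n+1}$, $\mathbb{Z}GS^{n+1}$ are all degreewise free abelian, a coproduct of the quasi‑isomorphisms $T_{X'}$ (the inductive hypothesis) and $T_{S^{n+1}}$ (the base case) is again a quasi‑isomorphism — it is a direct sum of tensor products of quasi‑isomorphisms of complexes of free abelian groups. Hence $\widetilde{T}$ induces an isomorphism on $E^1$, and by the comparison theorem for convergent spectral sequences it is an isomorphism on $E^\infty$, hence on $H_*$.

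I expect the main obstacle to be the two structural claims on Kan's side: that the filtration on the normalized $\mathbb{Z}GX$ by the number of new generators is stable under $d$, and that its associated graded is the coproduct $\mathbb{Z}GX'\amalg_{\mathbb{Z}}\mathbb{Z}GS^{n+1}$. Unlike $GX'$, the collection of new generators is not closed under faces: a face $d_i\tau(s_Jx')$ may be an \emph{old} generator, and the twisting–function relation $d_{m-1}\tau y=(\tau d_m y)^{-1}(\tau d_{m-1}y)$ for $y=s_Jx'$ a priori produces a product of \emph{two} new generators, which would raise the filtration. One must show, by a case analysis on $J$ governed by the simplicial identities, that in every such instance the two factors either coincide (and cancel) or one of them already lies in $\mathbb{Z}GX'$, so that $d$ never raises the new‑generator count and induces on the $\mathbb{Z}GS^{n+1}$‑factors exactly the differential of $\mathbb{Z}GS^{n+1}$ transported along $q$. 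Once this combinatorial bookkeeping is in place, the remainder of the argument is formal homological algebra.
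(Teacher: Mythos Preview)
Your strategy is the paper's strategy: reduce to homology via Lemma~\ref{lem:Hatcher}, use the levelwise free-product splitting $G_mX\cong G_mX'*G_mS^{n+1}_x$, filter both sides by the amount of ``new'' content coming from $x'$, and compare the resulting spectral sequences, with the base case $S^{n+1}$ done by hand. The paper filters by the number of maximal $G_*S^{n+1}_x$-blocks in the normal form of the free product, whereas you filter by the number of individual new generators; these agree on $CX$ but differ on $GX$.

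Your caution about the key obstacle is well placed, and the case analysis you propose does go through with your filtration. If $y=s_Jx'$ is represented by a surjection $\sigma\colon[m{+}1]\twoheadrightarrow[n{+}1]$ and is not $s_m$-degenerate, then $\sigma(m)<\sigma(m{+}1)=n{+}1$, so $d_{m+1}y$ omits the vertex $n{+}1$ and hence lies in $X'$; thus $d_m\tau y=(\tau d_{m+1}y)^{-1}(\tau d_m y)$ contributes at most one new factor, and for $i<m$ the single factor $\tau d_i y$ contributes at most one as well. So your new-generator count never increases under $d$. (By contrast, the paper's block count \emph{can} increase: a single element of $G_mS^{n+1}_x$ such as $\tau(s_0x')\,\tau(s_1x')^{-1}\,\tau(s_0x')$ may, after $d_0$, acquire interspersed $GX'$-factors and hence more blocks. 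Your choice of filtration sidesteps this.)

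The one place I would push back is the identification $\mathrm{gr}\,\mathbb{Z}GX\cong \mathbb{Z}GX'\amalg_{\mathbb{Z}}\mathbb{Z}GS^{n+1}$ as differential graded algebras, and likewise $\mathrm{gr}\,\widetilde T=T_{X'}\amalg T_{S^{n+1}}$. Even on $\mathrm{gr}_1$ the induced differential retains the old factor $(\tau d_{m+1}y)^{-1}\in GX'$, and $\widetilde T(cx')$ lands in $\mathrm{gr}_1$ as $Tcx'(\mathrm{id})=(\text{old factors})\cdot\tau x'$ rather than as the bare $\tau x'$ that $T_{S^{n+1}}$ would give; so neither identification is literal. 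What is true---and what the paper states---is that each $\mathrm{gr}_p$ is (the chains on) a wedge of smash products $Z'\wedge Z\wedge\cdots$ with $Z'=GX'$, $Z=GS^{n+1}_x$, i.e.\ a direct sum of tensor products of reduced complexes, and similarly on the cobar side; $\mathrm{gr}\,\widetilde T$ respects this decomposition and is then a quasi-isomorphism by K\"unneth and the inductive hypothesis together with the base case. Recasting your ``coproduct'' step in this smash/tensor language makes the argument go through cleanly.
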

\begin{proof}
Let $S^{n+1}_{x}$ be the simplicial sphere in which $x$ of dimension $n+1$ is the only non-degenerate simplex (besides the base point), such that $d_ix$ is a degeneracy of the base point for all $i$. As discrete groups we have an isomorphism $G_kX=G_kX'*G_kS^{n+1}_{x}$ as a free product in each dimension $k\geq 0$. 

For a given discrete group $G$, let $S(G)\subset G$ be the set collecting all elements except the identity. 
By the structure theorem of amalagams, an element in $G_kX'*G_kS^{n+1}_{x}$ other than the identity is uniquely presented in one of the forms below:
\begin{equation}
   g_1'g_1g_2'g_2\cdots g_{l}'g_l, \ g_1g_1'g_2g_2'\cdots g_lg_l', \ g_1'g_1g_2'g_2\cdots g_{l}g_{l+1}', \ g_1g_1'g_2g_2'\cdots g_{l-1}'g_{l}, \label{bas:RGX'}
\end{equation}
with $g_i',g_i$ running through elements from $S(G_kX')$ and $S(G_kS^{n+1}_{x})$, respectively. For $i=1,2,3,4$ and $l\geq 1$, let $C_{k,l}^i$ be the collection of the four types of elements in \eqref{bas:RGX'}, respectively, where $C_{k,1}^4=S(G_kS^{n+1}_{x})$. Formally we define $C_{k,0}^3=S(G_kX')$. 

Now we filtrate the simplicial set $GX$ such that $F_{-1}GX=\cup_{k=0}^\infty\{\mathrm{id}_{G_kX}\}$, \[F_0GX=\bigcup_{k=0}^{\infty}C_{k,0}^3\cup \{\mathrm{id}_{G_kX}\}=GX',
\quad F_t GX= \bigcup_{l\leq t}\bigcup_{k=0}^{\infty}\bigcup_{i=1}^4 C_{k,l}^i\cup\{\mathrm{id}_{G_kX}\}\] for $t\geq 1$.  It is easily checked that $F_tGX$ is a simplicial subset of $GX$, moreover, the quotient $F_tGX/F_{t-1}GX$ for each $t\geq 0$, as a simplicial set, is the wedge of the following four simplicial sets (here the smash product $Z\wedge Z'$ is the quotient of the cartesian product $Z\times Z'$ by the subspace $*\times Z'\cup Z\times *'$,  $*\in Z,*'\in Z'$ the base points, respectively).
\begin{equation}
   Z'\wedge Z \wedge\ldots\wedge Z'\wedge Z, \ Z\wedge Z' \wedge\ldots\wedge Z\wedge Z', \ Z'\wedge Z\wedge Z' \wedge\ldots\wedge Z\wedge Z', \ Z\wedge Z'\wedge Z \wedge\ldots\wedge Z'\wedge Z, \label{sma:ZZ'}
\end{equation}
each containing exactly $t$ copies of $Z$, where $Z'=GX'$ and $Z=GS^{n+1}_{x}$.

For a given simplicial set $Y$ which is reduced with base point $*$, let $S(CY)$ be the set $CY\setminus \{1\}$, with $1=c[0,0]_*$. Notice that we have an isomorphism $CX=CX'*CS^{n+1}_{x}$, a free product of monoids.  Therefore, by an argument similar to that for free products of groups, every element of $CX$ except the identity admits a unique presentation in one of the forms  of \eqref{bas:RGX'},
with $g_i',g_i$ running through $S(CX'), S(CS^{n+1}_{x})$, respectively. 

For $i=1,2,3,4$, let $B_l^i$ be the collection of all basis elements of the $4$ types as in \eqref{bas:RGX'}, respectively, in which $l\geq 0$ in $B_l^3$ with $B_0^3=S(CX')$, and $l\geq 1$ in $B_l^i$ for $i=1,2,4$, with $B_1^4=S(CS^{n+1}_{x})$.

 We endow the cubical complex $|CX|$ with a filtration such that $F_{-1}(|CX|)=|\{1\}|$, the base point, $F_0(|CX|)=|CX'|=|B_0^3\cup\{1\}|$, and $F_t(|CX|)=\cup_{l\leq t} \cup_{i=1}^4|B_l^i\cup\{1\}|$, for all $t\geq 1$, which is well-defined since
\[d_i^{0,1}F_t(CX)\subset F_t(CX)\] 
coming from the observation that for $Y=X'$ or $Y=S_x^{n+1}$, $c[0,\ldots,i]_y\cdot c[i,\ldots, n+1]_y$ and $cd_iy$ remain in $CY$, if $y\in Y$. 
 Clearly $|CX|=\cup_{t\geq -1}F_t(|CX|)$.  Again by checking \eqref{bas:RGX'} we see that the associated quotient $F_t(|CX|)/F_{t-1}(|CX|)$, is homotopy equivalent to the wedge of smashes of the form \eqref{sma:ZZ'}, where $Z'=|CX'|$ and $Z=|CS^{n+1}_{x}|$.         

We define $F_t|GX|=|F_t(GX)|$, $t\geq 0$. By a), c) in Theorem \ref{thm:main0}, $|T|\co |CX|\to |GX|$ is filtration preserving. More precisely, $T$ maps an elements in $CX$ of the form \eqref{bas:RGX'} to an element in $GX$ of the same form, hence $|T|$ sends the geometric realization of a smash product of the form $\eqref{sma:ZZ'}$ to that of the same form.  To prove the weak homotopy equivalence, by Lemma \ref{lem:Hatcher}, it suffices to show that $T$ induces a morphism of spectral sequences with respect to the filtrations above, whose $E^1$-pages are isomorphic to the homology of the smash products of the form $\eqref{sma:ZZ'}$, respectively. After a comparison of the $E^1$-pages we see that it suffices to prove the isomorphism $H_*(|CS^{n+1}_{x}|;\mathbb{Z})\to H_*(|GS^{n+1}_{x}|;\mathbb{Z})$, by the K\"{u}nneth formula. It is well known that both of them are isomorphic to $\mathbb{Z}[u]$ when $n\geq 1$, the polynomial algebra with $u$ of degree $n$, corresponding to the classes represented $cx\in CS^{n+1}_{x}$ and $\tau x\in GS^{n+1}_{x}$, respectively, which are connected by $T$.   
When $n=0$ we have a commutative diagram 
\[
 \begin{CD}
               |CS^1_x| @>|T|>> |GS^1_x|\\
                @A\cong AA                @A\cong AA\\
                \mathbb{Z}_{\geq 0} @>\subset>> \mathbb{Z}
 \end{CD}
\]
with $\mathbb{Z}_{\geq 0}$ (resp. $\mathbb{Z}$) the set of non-negative integers as a topological monoid (resp. the set of integers as a topological group), endowed with the discrete topology, where the vertical maps are isomorphisms. After the localization we get the desired isomorphism.
\end{proof}

\begin{proof}[Proof of e) in Theorem \ref{thm:main0}]
The statement that $|T|:|CX|\to |GX|$ induces a morphism of topological monoids is already proved in d). By Lemma \ref{lem:Hatcher}, we only need to prove the isomorphism 
\begin{equation}
H_*(|CX|;\mathbb{Z})\stackrel{\cong}{\longrightarrow} H_*(|GX|;\mathbb{Z})\label{iso:hom}
\end{equation} 
induced by $|T|$. Let $\widetilde{C}X=S^{-1}CX$ be the localization of $CX$ with respect to the set $S=\{cx\mid x\in X_1\}$, where $X_1$ is the set of $1$-simplices of $X$ which contains the degeneracy $[0,0]_*$ of the base point $*$, and we have $c[0,0]_*=1$, the unit of $CX$.  The localization is well-defined since the monoid $CX$ is free, and we have a canonical inclusion $|CX|\to |\widetilde{C}X|$ of topological monoids,
such that the diagram
\[
    \begin{tikzcd}
{|CX|} \arrow[rr, "\subset"]\arrow[rrd, "{|T|}"'] & & {|\widetilde{C}X|} \arrow[d, "{|\widetilde{T}|}"]\\
                                    &  & {|GX|}
\end{tikzcd}
\]
commutes.

First we claim that $|\widetilde{T}|$ induces an isomorphism $H_*(|\widetilde{C}X|;\mathbb{Z})\longrightarrow H_*(|GX|;\mathbb{Z})$ of homology groups.  The special case when $X$ is simplicial set with a finite number of non-degenerate simplices can be done by an induction using Propositon \ref{prop:fil}. For a general $X$, every given cycle or boundary in the homology of $|\widetilde{C}X|$ or $|GX|$ has only finitely many simplices involved, therefore an element in the kernel or cokernel of the morphism \eqref{iso:hom} of homology groups is supported by a compact subset of $|X|$, which is reduced the a special case, hence the claim holds.

Next we consider the morphism $\omega\co H_*(|CX|;\mathbb{Z})\longrightarrow H_*(|\widetilde{C}X|;\mathbb{Z})$ induced by the inclusion. The assumption that $|X|$ is simply connected,  implies that $|GX|$ is connected by the long exact sequence of homotopy groups, hence $-cx$ and $(cx)^{-1}$ represent the same class in $H_0(|\widetilde{C}X|;\mathbb{Z})$, since their images coincide in $H_0(|GX|;\mathbb{Z})$ under $|\widetilde{T}|$ by the claim above. We see that every cycle in the chain complex $(\mathbb{Z}\widetilde{C}X,d)$ is represented by a finite sum in which each summand is a monomial without elements of the form $(cx)^{-1}$, since each of them can be replaced by $-cx$, up to a boundary, which means that $\omega$ is surjective. The injectiveness of $\omega$ comes from the observation that a cycle in $(\mathbb{Z}{C}X,d)$ bounds in $(\mathbb{Z}\widetilde{C}X,d)$ under the inclusion $CX\to \widetilde{C}X$ also bounds by the same element in $\mathbb{Z}{C}X$, after ignoring all summands with $(cx)^{-1}$ involved, as $d(cx)^{-1}=0$ these summands make no contributions after $d$. 

The two isomorphisms above give the desired isomorphism \eqref{iso:hom}.
\end{proof}
We state a byproduct as the corollary below, which is already proved by the arguments above.
\begin{cor}
    Let $X$ be a reduced simplicial set. The morphism $|\wt{T}|\co |\wt{C}X|\to |GX|$ of topological monoids induced from $|T|\co |CX|\to |GX|$ is a weak homotopy equivalence, where $\wt{C}X=S^{-1}CX$ is the localization of $CX$ with respect to the set $S=\{cx\mid x\in X_1\}$.

     Let $(R\wt{C}X,d)$ be the differential graded algebra generated by $cx$ of degree $n$, as $x=[0,\ldots,n+1]_x$ runs through non-degenerate simplices of dimension $n\geq 0$, as well as generators $(cx)^{-1}$ with relations $cx\cdot (cx)^{-1}=(cx)^{-1}\cdot cx=1$ when $n=0$, whose differential satisfies ($dcx=d(cx)^{-1}=0$ if $n=0$)
\begin{align*}
dcx=\sum_{i=1}^{n}(-1)^{i}(d_i^0cx-d_i^{1}cx)=\sum_{i=1}^{n}(-1)^i(c[0,\ldots,i]_x \cdot c[i,\ldots,n+1]_x-cd_ix),
\end{align*}
where $cd_ix$ (resp. $c[0,\ldots,i]_x$ or $c[i,\ldots,n+1]_x$) vanishes whenever $d_ix$ (resp. $[0,\ldots,i]_x$ or $[i,\ldots,n(x)+1]_x$) is degenerate. We have an isomorphism 
\[
     H_*(\Omega|X|;R)=H_*(R\wt{C}X,d)
\]
of graded algebras.
\end{cor}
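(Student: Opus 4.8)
The plan is to deduce both assertions by reassembling results already in hand, since the work has essentially been done. For the first assertion, observe that the proof of part~e) of Theorem~\ref{thm:main0} actually establishes more than is stated there: its intermediate claim, that $|\widetilde T|\co|\widetilde C X|\to|GX|$ induces an isomorphism on integral homology, is proved from Proposition~\ref{prop:fil} together with the passage to finite subcomplexes by compactness, and this argument uses nothing about the fundamental group of $|X|$. So for an arbitrary reduced $X$ we already have $H_*(|\widetilde C X|;\mathbb{Z})\cong H_*(|GX|;\mathbb{Z})$ via $|\widetilde T|$. To promote this to a weak homotopy equivalence I would argue componentwise: being an isomorphism in degree $0$, $|\widetilde T|$ induces a bijection of path components (both $H_0$'s being the free abelian groups on those components, and the map being induced by a continuous map it is monomial in these bases), and it matches the decomposition of $|\widetilde C X|$ into components with that of $|GX|$; hence the integral homology isomorphism restricts to one on identity components. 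The identity component $|\widetilde C X|_e$ is a connected topological submonoid, and likewise $|GX|_e$, so Lemma~\ref{lem:Hatcher} applies and gives a weak equivalence $|\widetilde C X|_e\to|GX|_e$; translating by the invertible degree-$0$ elements (every $0$-cell of $\widetilde C X$ is invertible after localization, and every $0$-cell of $|GX|$ is since $GX$ is a simplicial group), which is compatible with $|\widetilde T|$ because $|\widetilde T|$ is a monoid map, spreads this equivalence over the remaining components, and together with the $\pi_0$-bijection this yields the claimed weak homotopy equivalence.

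For the homology statement I would chain three isomorphisms of graded $R$-algebras. First, $(R\widetilde C X,d)$ is the normalized cubical chain complex of the cubical complex $|\widetilde C X|$: its generators are the non-degenerate cubes (the degenerate ones being negligible on homology as in Corollary~\ref{cor:Adams}), its differential is the alternating sum of the top and bottom faces computed in part~c) of Theorem~\ref{thm:main0} together with the localization relations $d(cx)^{-1}=0$, and its product is the one induced by the monoid structure of $\widetilde C X$; hence $H_*(R\widetilde C X,d)\cong H_*(|\widetilde C X|;R)$ as graded algebras, the target carrying the Pontryagin product. Second, the first part of the corollary provides a weak equivalence of topological monoids $|\widetilde T|$, so $H_*(|\widetilde C X|;R)\cong H_*(|GX|;R)$ as graded algebras. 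Third, the inclusion $GX\hookrightarrow\Omega X_v$ of simplicial groups is a weak equivalence by Corollary~\ref{thm:Berger}, and $h\varrho\co|\Omega X_v|\to\Omega|X_v|=\Omega|X|$ is a weak equivalence by Theorem~\ref{thm:main5}; their composite, the morphism $|GX|\to\Omega|X|$ of the introduction, is a map of topological monoids, so $H_*(|GX|;R)\cong H_*(\Omega|X|;R)$ as graded algebras. Composing the three isomorphisms gives $H_*(\Omega|X|;R)=H_*(R\widetilde C X,d)$ as graded algebras, which is exactly the non-simply-connected refinement of Corollary~\ref{cor:Adams}.

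The step needing the most care is precisely the promotion from a homology isomorphism to a weak homotopy equivalence when $|X|$ is not simply connected, because Lemma~\ref{lem:Hatcher} is phrased for \emph{connected} topological monoids: one must argue that $|\widetilde C X|$ and $|GX|$ are disjoint unions of copies of their identity components indexed by a common $\pi_0$ (both identified with $\pi_1(|X|)$) and that $|\widetilde T|$ respects this, which is furnished by the degree-$0$ part of the already-proved integral homology isomorphism. Everything else is routine: that the normalized cubical chains of an $H$-space compute its homology with the Pontryagin product is standard, and the remaining ingredients are quoted verbatim from Corollary~\ref{thm:Berger}, Theorem~\ref{thm:main5}, and the proof of part~e) of Theorem~\ref{thm:main0}.
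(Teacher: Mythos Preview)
Your proposal is correct and follows the same route as the paper, which simply declares the corollary a byproduct of the arguments already given (the claim in the proof of part~e) of Theorem~\ref{thm:main0} that $|\widetilde T|$ is an integral homology isomorphism, together with Corollary~\ref{thm:Berger} and Theorem~\ref{thm:main5}). Your treatment is in fact more careful than the paper on one point: since $|GX|$ need not be connected when $|X|$ is not simply connected, Lemma~\ref{lem:Hatcher} does not apply directly, and your componentwise argument---$\pi_0$-bijection from the $H_0$-isomorphism, Lemma~\ref{lem:Hatcher} on the identity components, then translation by invertible $0$-cells---is exactly the detail needed to close this gap, which the paper leaves implicit.
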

The second part of the corollary above has been proved by Hess and Tonks \cite{HT10}. We may use a basis change for $R\wt{C}X$ so that its differential coincides with \eqref{eq:Ad2}.

\end{document}